\numberwithin{equation}{section}
\numberwithin{figure}{section}
\theoremstyle{plain}
\newtheorem{thm}{\protect\theoremname}[section]
  \theoremstyle{definition}
  \newtheorem{defn}[thm]{\protect\definitionname}
  \theoremstyle{remark}
  \newtheorem{rem}[thm]{\protect\remarkname}
  \theoremstyle{plain}
  \newtheorem*{question*}{\protect\questionname}
\newcounter{mainthm}
\theoremstyle{plain}
\newtheorem{main_thm}[mainthm]{Main Theorem}
  \theoremstyle{plain}
  \newtheorem{cor}[thm]{\protect\corollaryname}
  \theoremstyle{plain}
  \newtheorem{question}[thm]{\protect\questionname}
  \theoremstyle{remark}
  \newtheorem{claim}[thm]{\protect\claimname}
  \theoremstyle{plain}
  \newtheorem{prop}[thm]{\protect\propositionname}
  \theoremstyle{plain}
  \newtheorem{fact}[thm]{\protect\factname}
  \theoremstyle{plain}
  \newtheorem{lem}[thm]{\protect\lemmaname}
  \providecommand{\claimname}{Claim}
  \providecommand{\corollaryname}{Corollary}
  \providecommand{\definitionname}{Definition}
  \providecommand{\factname}{Fact}
  \providecommand{\lemmaname}{Lemma}
  \providecommand{\propositionname}{Proposition}
  \providecommand{\questionname}{Question}
  \providecommand{\remarkname}{Remark}
\providecommand{\theoremname}{Theorem}
\begin{document}
\global\long\def\C{\mathfrak{C}}
\global\long\def\P{\mathbf{P}}
\global\long\def\p{\mathbf{p}}
\global\long\def\q{\mathbf{q}}
\global\long\def\SS{\mathcal{P}}
\global\long\def\Zz{\mathbb{Z}}
 \global\long\def\mod{\mbox{mod}}
\global\long\def\pr{\mbox{pr}}
\global\long\def\image{\mbox{im}}
\global\long\def\otp{\mbox{otp}}
\global\long\def\dec{\mbox{dec}}
\global\long\def\ist{\mbox{ist}}
\global\long\def\pre{\mbox{\dpr}}
 \global\long\def\lev{\mbox{\mbox{lev}}}
\global\long\def\Suc{\mbox{\mbox{Suc}}}
\global\long\def\Aut{\mbox{Aut}}
\global\long\def\minb{{\min}}
\global\long\def\concat{\frown}
\global\long\global\long\global\long\def\alg{\operatorname{alg}}
 \global\long\global\long\global\long\def\sep{\operatorname{sep}}
\global\long\def\NTPT{\operatorname{NTP}_{\operatorname{2}}}
\global\long\def\tp{\operatorname{tp}}
\global\long\def\dpr{\operatorname{rk-dp}}
\global\long\def\id{\operatorname{id}}

\def\Ind#1#2{#1\setbox0=\hbox{$#1x$}\kern\wd0\hbox to 0pt{\hss$#1\mid$\hss} \lower.9\ht0\hbox to 0pt{\hss$#1\smile$\hss}\kern\wd0} 
\def\Notind#1#2{#1\setbox0=\hbox{$#1x$}\kern\wd0\hbox to 0pt{\mathchardef \nn="3236\hss$#1\nn$\kern1.4\wd0\hss}\hbox to 0pt{\hss$#1\mid$\hss}\lower.9\ht0 \hbox to 0pt{\hss$#1\smile$\hss}\kern\wd0} 

  \theoremstyle{definition}
  \newtheorem{defClaim}[thm]{Definition/Claim}

\begin{flushleft}
\global\long\def\ind{\mathop{\mathpalette\Ind{}}}
 \global\long\def\nind{\mathop{\mathpalette\Notind{}}}

\par\end{flushleft}

\global\long\def\alt{\mbox{\mbox{alt}}}
\global\long\def\seq{\mbox{\mbox{seq}}}

\title{Witnessing dp-rank}

\author{Itay Kaplan and Pierre Simon}
\begin{abstract}
We prove that in $\NTPT$ theories if $p$ is a dependent type with
dp-rank $\geq\kappa$, then this can be witnessed by indiscernible
sequences of tuples satisfying $p$. If $p$ has dp-rank infinity,
then this can be witnessed by singletons (in any theory). 
\end{abstract}

\thanks{The first author's research was partially supported by the SFB 878
grant. }

\maketitle

\section{Introduction}

In this note we answer a question of Alf Onshuus and Alexander Usvyatsov,
whether dp-minimality can be witnessed by indiscernible sequences
of singletons. We prove two general theorems regarding dp-rank.

\medskip{}

Let $Card$ denote the class of cardinals. We define $Card^{*}$ to
be the class $Card$ to which we add an element $\kappa_{-}$ for
each infinite cardinal $\kappa.$ We extend the linear order from
$Card$ to $Card^{*}$ by setting $\mu<\kappa_{-}<\kappa$ whenever
$\mu<\kappa$ are cardinals.
\begin{defn}
\label{def:dp-rank} Let $p\left(x\right)$ be a partial (consistent)
type over a set $A$ ($x$ is a finite tuple, here and throughout
the paper). We define the \emph{dp-rank} of $p\left(x\right)$ (which
is an element of $Card^{*}$ or $\infty$) as follows:
\begin{itemize}
\item Let $\kappa$ be a cardinal. We will say that $p\left(x\right)$ has
dp-rank $<\kappa$ (which we write $\dpr\left(p\right)<\kappa$) if
given any realization $a$ of $p$ and any $\kappa$ mutually indiscernible
sequences over $A$, at least one of them is indiscernible over $Aa$.
\item We say that $p$ has dp-rank $\kappa$ over $A$ (or $\dpr\left(p\right)=\kappa$)
if it has dp-rank $<\mu$ for all $\mu>\kappa$, but it is not the
case that $\dpr\left(p\right)<\kappa$.
\item If $\kappa$ is an infinite cardinal, we say that $p$ has dp-rank
$\kappa_{-}$ over $A$ (or $\dpr\left(p\right)=\kappa_{-}$) if it
has dp-rank $<\kappa$, but for no $\mu<\kappa$ do we have $\dpr\left(p\right)<\mu$.
\item If $\dpr\left(p\right)<\kappa$ holds for no cardinal $\kappa$, then
we say that $p$ has dp-rank $\infty$.
\item We call $p$ \emph{dp-minimal }if it has dp-rank $1$.
\item We call $p$ \emph{dependent} if $\dpr\left(p\right)<\infty$. This
is equivalent to $\dpr\left(p\right)<\left|T\right|^{+}$ (see Corollary
\ref{cor:infinite is even more bounded}).
\end{itemize}
\end{defn}
\begin{rem}
\label{rem:Base Set}It is easy to see that the set $A$ does not
matter, as long as $p$ is defined over it. Indeed, for a set $B$
over which $p$ is defined, let us define for the sake of discussion
$\dpr\left(p,B\right)$ as the dp-rank of $p$ over $B$ similarly
to the definition above but we add the requirement that the sequences
are mutually indiscernible over $B$. If $A\subseteq B$, and $p$
is a type over $A$, then it is easy to see that $\dpr\left(p,B\right)\leq\dpr\left(p,A\right)$
while the other direction uses a standard application of Ramsey theorem,
so $\dpr\left(p,B\right)=\dpr\left(p,A\right)$. 
\end{rem}
Note also that if $q\left(x\right)$ extends $p\left(x\right)$ then
$\dpr\left(p\left(x\right)\right)\geq\dpr\left(q\left(x\right)\right)$,
so:
\begin{rem}
\label{prop:extension of dependent}Any extension of a dependent type
is dependent.
\end{rem}
Recall:
\begin{defn}
A (complete, first order) theory $T$ is \emph{dp-minimal} if the
type $\left\{ x=x\right\} $ is dp-minimal. The theory $T$ is \emph{dependent}
if the type $\left\{ x=x\right\} $ is dependent.
\end{defn}
Dp-rank and dependent types were originally defined in \cite{Us}
and further studied in \cite{OnUs1}. Dp-rank is a simplification
of the various ranks appearing in \cite{Sh863}. We use a slightly
different convention for it than those two papers which has the advantage
of distinguishing between $\kappa$ and $\kappa_{-}$. Yet another
convention is used in \cite{AlexAlfTemp} which has the disadvantage
of giving a different meaning to $\dpr\left(p\right)=\kappa$ depending
on whether $\kappa$ is finite or infinite. Dp-minimality was first
defined in \cite{OnUs1}. It is shown in \cite{Simon-Dp-min} that
the original definition of dp-minimality is equivalent to the definition
given here.

Examples of dp-minimal theories include all o-minimal theories and
C-minimal theories. 

Note that the sequences that witness $\dpr\left(p\right)\geq\kappa$
in Definition \ref{def:dp-rank} can always be taken to be sequences
of finite tuples, but can we bound the length?
\begin{question*}
\label{que:OU}(A. Onshuus, A. Usvyatsov) Can we assume in the definition
of dp-minimality that the indiscernible sequences are sequences of
singletons?
\end{question*}
We provide a positive answer in Corollary \ref{cor:Answer} below,
but we need to add parameters to the base. 

We prove the following two theorems:
\begin{main_thm}
\label{MainThmInDependent}If $p$ is a type over $A$ which is independent
(i.e. $\dpr\left(p\right)=\infty$), then there is some $A'\supseteq A$
such that $\left|A'\backslash A\right|$ is finite, a realization
$a\models p$ and $A'$-mutually indiscernible sequences of \uline{singletons}
$\left\langle I_{i}\left|\, i<\left|T\right|^{+}+\left|A\right|^{+}\right.\right\rangle $
such that $I_{i}$ is not indiscernible over $A'a$ for all $i$. 
\end{main_thm}
From this we will deduce:
\begin{cor}
\label{cor:To-check-whether}To check whether a theory is dependent
it is enough to check that for every indiscernible sequence of singletons
$\left\langle a_{i}\left|\, i<\left|T\right|^{+}\right.\right\rangle $
over some finite $A$, and for every singleton $c$, there is $\alpha<\left|T\right|^{+}$
such that $\left\langle a_{i}\left|\, i>\alpha\right.\right\rangle $
is indiscernible over $Ac$. 
\end{cor}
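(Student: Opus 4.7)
We prove the contrapositive: assume $T$ is not dependent and construct a sequence and singleton witnessing the failure of the stated condition. Since $T$ is not dependent, $\dpr(\{x=x\}) = \infty$ as a partial type over $\emptyset$. Applying Main Theorem \ref{MainThmInDependent} with $p = \{x = x\}$ and $A = \emptyset$ yields a finite set $A'$, a singleton $a$, and a family $\langle I_i \mid i < |T|^+ \rangle$ of mutually $A'$-indiscernible sequences of singletons, each $I_i$ being non-$A'a$-indiscernible. The plan is to merge these into a single $A'$-indiscernible sequence of singletons of length $|T|^+$ such that, taking $c := a$, no tail is $A'a$-indiscernible; together with $A := A'$ (finite) and $c := a$ (singleton) this directly contradicts the displayed condition.

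For each $i$, fix a witness formula $\phi_i(\bar x; y, A')$ and increasing position-tuples $\bar j^0_i, \bar j^1_i$ inside $I_i$. A pigeonhole on the $|T|^+$ indices --- there are only $|T|$ formulas over $A'$ and countably many finite tuples of positions --- produces a cofinal sub-family sharing a common witness $(\phi, \bar j^0, \bar j^1)$; after flipping $\phi$ if needed, $\phi(\bar b_i^{\bar j^0}; a, A')$ holds and $\phi(\bar b_i^{\bar j^1}; a, A')$ fails for every $i$. Let $n$ be the (finite) number of distinct indices appearing in $\bar j^0 \cup \bar j^1$ and truncate each $I_i$ to the $n$ singletons at those positions.

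Now a Ramsey/Erd\H{o}s-Rado extraction --- enlarging the initial family of sequences via compactness if $|T|^+$ is not large enough for the Ramsey step, since Main Theorem \ref{MainThmInDependent} remains valid for any target cardinal --- produces a sub-family on which the concatenation of the truncated blocks in their natural order is an $A'$-indiscernible sequence of singletons $\langle a_\gamma \mid \gamma < n \cdot |T|^+ \rangle$. Because $n$ is finite, the ordinal identity $n \cdot |T|^+ = |T|^+$ gives a sequence of the required length $|T|^+$. For every $\alpha < |T|^+$ the tail past $\alpha$ contains some entire $n$-block $I_i$, and inside that block the uniform witness $\phi(\bar b_i^{\bar j^0}; a, A') \wedge \neg \phi(\bar b_i^{\bar j^1}; a, A')$ shows that the tail is not $A'a$-indiscernible. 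Setting $A := A'$ and $c := a$ produces the required counterexample, completing the contrapositive.

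The main technical obstacle is the Ramsey extraction step: one must synchronize the $A'$-types of lex-increasing finite tuples drawn from the array so that tuples lying within a single block and tuples straddling several blocks carry the same $A'$-type, thereby making the flattened concatenation genuinely $A'$-indiscernible as a sequence of singletons and not merely block-wise. The remaining ingredients --- pigeonholing the witness, the ordinal identity $n \cdot |T|^+ = |T|^+$ for finite $n$, and the translation of block-level non-indiscernibility into tail-level non-indiscernibility --- are routine bookkeeping.
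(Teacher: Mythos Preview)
Your argument has a genuine gap at the step you yourself flag as the ``main technical obstacle'': the Ramsey extraction cannot, in general, make the flattened concatenation of the truncated blocks into an $A'$-indiscernible sequence of \emph{singletons}. Ramsey applied to the outer index set can homogenize the $A'$-types of tuples drawn from \emph{distinct} blocks, but it cannot touch the $A'$-type of a tuple lying \emph{inside} a single block --- that type is fixed once the block is fixed, and thinning the family of blocks does not change it. So if the within-block type of a consecutive pair differs from the (homogenized) cross-block type of a consecutive pair, the flattened sequence is never indiscernible, no matter which sub-family you keep. Nothing in the hypotheses forces these two types to agree: mutual $A'$-indiscernibility of the $I_i$ says each $I_i$ is indiscernible over the others, not that elements in different $I_i$ relate to each other the same way elements within one $I_i$ do. Concretely, a configuration in which each block is $(b_i^0,b_i^1)$ with $R(b_i^0,b_i^1)$ holding but $\neg R(b_i^k,b_j^l)$ for all $i\neq j$ is perfectly compatible with all your hypotheses, and then every concatenation contains adjacent pairs of both $R$-types.

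The paper does not attempt any such concatenation. Instead it proves the equivalence (Corollary~\ref{cor:1-dep-iff}) that $p$ is dependent iff $p$ satisfies the end-segment condition of Claim~\ref{cla:1-dep-shrinking}, and then simply instantiates $p=\{x=x\}$. The substantive work is in Claim~\ref{cla:1-dep-shrinking} and Theorem~\ref{thm:MainA}: the former reduces an $n$-tuple of realizations $\bar a_0$ to singletons by \emph{swapping} the roles of sequence and parameter (Claim~\ref{cla:infinite is bounded}, (5)$\Rightarrow$(4)) to obtain an $\omega^{n+1}$-indexed sequence of singletons, then peels off the coordinates of $\bar a_0$ one at a time using $1$-dependence over successively enlarged finite bases; the latter absorbs the coordinates of the witness tuple $c$ one at a time into the base. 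This coordinate-by-coordinate absorption, rather than any flattening of an array, is how the paper gets from arrays of singletons to a single sequence of singletons with a singleton parameter.
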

The second result is about dependent types, but to prove it we need
to assume%
\footnote{After the appearance of this note, Artem Chernikov has removed this
assumption, see \cite{ArtemNTP2}.%
} that the theory is $\NTPT$.
\begin{defn}
A theory $T$ is\emph{ $\NTPT$} \emph{(does not have the tree property
of the second kind)} if there is no formula $\varphi\left(x,y\right)$
and array $\left\langle a_{i,j}\left|\, i,j<\omega\right.\right\rangle $
such that for every $i<\omega$, $\left\{ \varphi\left(x,a_{i,j}\right)\left|\, j<\omega\right.\right\} $
is $k$-inconsistent (i.e. each subset of size $k$ is inconsistent)
and for every $\eta:\omega\to\omega$, the set $\left\{ \varphi\left(x,a_{i,\eta\left(i\right)}\right)\left|\, i<\omega\right.\right\} $
is consistent.
\end{defn}
The class of $\NTPT$ theories contains both simple and dependent
theories. 
\begin{main_thm}
\label{MainThmDep}Assume $T$ is $\NTPT$, and that $p$ is a dependent
type over $A$ with $\dpr\left(p\right)\geq\kappa$. Then there is
some $A'\supseteq A$, some $a\models p$ and $A'$-mutually indiscernible
sequences $\left\{ I_{i}\left|\, i<\kappa\right.\right\} $ such that
each of them is not indiscernible over $A'a$ \uline{and} all tuples
in each $I_{i}$ satisfy $p$. 
\end{main_thm}
Note that we may always choose $A'$ so that $|A'\setminus A|$ is
at most $\kappa+\aleph_{0}$ since, for each sequence $I_{i}$, we
only need finitely many parameters from $A'$ to witness that $I_{i}$
is not indiscernible over $A'a$.

Now we can answer Question \ref{que:OU}:
\begin{cor}
\label{cor:Answer}If $T$ is not dp-minimal, then there is some finite
set $A'$, some singleton $a$ and two $A'$-mutually indiscernible
sequences $\left\{ I,J\right\} $ \uline{of singletons} such that
both $I$ and $J$ are not indiscernible over $A'a$.\end{cor}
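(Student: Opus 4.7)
The plan is to derive the corollary directly from the two Main Theorems by case-splitting on whether $T$ is dependent. Throughout, I would take $p(x) = \{x = x\}$ with $x$ a single variable, so that $p$ is defined over $\emptyset$, realizations of $p$ are singletons, and tuples satisfying $p$ are also singletons. The hypothesis that $T$ is not dp-minimal is, by definition, the statement $\dpr(p) \geq 2$.

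\smallskip

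\textbf{Case 1: $T$ is not dependent.} Then $\dpr(p) = \infty$, so I would apply Main Theorem \ref{MainThmInDependent} to $p$ with $A = \emptyset$. This yields a finite set $A'$, a singleton $a \models p$, and $|T|^+$-many $A'$-mutually indiscernible sequences of singletons, each of which fails to be indiscernible over $A'a$. Taking $I$ and $J$ to be any two of these sequences immediately gives the conclusion.

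\smallskip

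\textbf{Case 2: $T$ is dependent.} Then $T$ is $\NTPT$ (since every dependent theory is $\NTPT$) and $p$ is a dependent type with $\dpr(p) \geq 2$. I would apply Main Theorem \ref{MainThmDep} with $\kappa = 2$ and $A = \emptyset$, obtaining some $A' \supseteq \emptyset$, a realization $a \models p$ (hence a singleton), and two $A'$-mutually indiscernible sequences $I_0, I_1$, each not indiscernible over $A'a$, whose entries all satisfy $p$ and are therefore singletons. To arrange that $A'$ is finite I would invoke the remark following Main Theorem \ref{MainThmDep}: for each of the two sequences $I_i$, only finitely many parameters from $A'$ are needed to witness non-indiscernibility over $A'a$, so the union of these witnesses is a finite set that can replace $A'$.

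\smallskip

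There is no serious obstacle here: both main theorems already encode the hard content, and the corollary amounts to recognizing that singletons suffice in both cases because the ambient type is $\{x = x\}$ in one variable. The only point requiring care is trimming $A'$ to a finite set in the dependent case, which is immediate from the parameter-finiteness remark since $\kappa = 2$ is itself finite.
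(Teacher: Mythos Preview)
Your proposal is correct and follows essentially the same approach as the paper: split into the dependent and independent cases, apply Main Theorem~\ref{MainThmDep} (using that dependent theories are $\NTPT$) in the former and Main Theorem~\ref{MainThmInDependent} in the latter, and trim $A'$ to a finite set in the dependent case via the remark that only finitely many parameters are needed to witness non-indiscernibility for each of the two sequences.
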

\begin{proof}
Right to left is obvious. For the other direction, if $T$ is dependent
then we may use Main Theorem \ref{MainThmDep} (since there are only
two sequences, only finitely many parameters from $A'$ are needed
to witness non-indiscernibility, so we may assume that $A'$ is finite).
But if $T$ is not dependent, then by Main Theorem \ref{MainThmInDependent}
there exists such $a$, $A$ and infinitely many such sequences. 
\end{proof}
The following question remains open:
\begin{question}
(J. Ramakrishnan) Can we assume in the definition of dp-rank that
the indiscernible sequences are sequences of singletons by adding
parameters to the base?
\end{question}
Our results show that this is indeed the case when the type is independent
or when it is the type of a singleton in an $\NTPT$ theory. 

In Section \ref{sec:Proof-of-InDep} we prove Main Theorem \ref{MainThmInDependent},
and in Section \ref{sec:Proof-of-Dep} we prove Main Theorem \ref{MainThmDep}.
\begin{question}
Are the extra parameters in the Main Theorems needed?
\end{question}
Throughout the paper, $\C$ will denote a monster model of the theory
$T$ (i.e. a very big saturated model).

\section{\label{sec:Proof-of-InDep}On dependent types and a proof of Main
Theorem \ref{MainThmInDependent}}

\subsection{On dependent types}

We start with the following easy observation (which is somewhat similar
to \cite[Observation 2.7]{OnUs1}), with a very straightforward proof. 
\begin{claim}
\label{cla:infinite is bounded}Suppose $p\left(x\right)$ is a partial
type over $A$. Then the following are equivalent:
\begin{enumerate}
\item There is $a\models p$ and $A$-mutually indiscernible sequences $\left\langle I_{i}\left|\, i<\omega\right.\right\rangle $
such that the sequence $\left\langle I_{i}\left|\, i<\omega\right.\right\rangle $
is indiscernible over $Aa$, and for each $i$, $I_{i}$ is not indiscernible
over $Aa$. 
\item $p$ is independent.
\item $\dpr\left(p\right)\geq\left|T\right|^{+}+\left|A\right|^{+}$.
\item There is an $A$-indiscernible sequence $\left\langle a_{i}\left|\, i<\omega\right.\right\rangle $
such that $a_{i}\models p$, a formula $\varphi\left(x,y\right)$
and some $c$ such that $\varphi\left(a_{i},c\right)$ holds iff $i$
is even. 
\item There is an $A$-indiscernible sequence $\left\langle b_{i}\left|\, i<\omega\right.\right\rangle $,
a formula $\psi\left(y,x\right)$ and some $d\models p$ such that
$\psi\left(b_{i},d\right)$ holds iff $i$ is even.
\item There is a set $\left\{ a_{i}\left|\, i<\omega\right.\right\} $ of
realizations of $p$ and a formula $\varphi\left(x,y\right)$ such
that for every $s\subseteq\omega$, there is some $c_{s}$ such that
$\varphi\left(a_{i},c_{s}\right)$ holds iff $i\in s$.
\item There is a set $\left\{ b_{i}\left|\, i<\omega\right.\right\} $ and
a formula $\psi\left(y,x\right)$ such that for every $s\subseteq\omega$,
there is some $d_{s}\models p$ such that $\psi\left(b_{i},d_{s}\right)$
holds iff $i\in s$.
\end{enumerate}
\end{claim}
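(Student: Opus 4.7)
The plan is to establish the equivalences by a cycle exploiting the natural grouping of the seven conditions: $(1)$, $(3)$ are the dp-rank formulations, $(4)$, $(5)$ are single-formula witnesses on an $A$-indiscernible sequence, and $(6)$, $(7)$ are full subset-coding IP witnesses, with $(4)$, $(6)$ placing realizations of $p$ on the type-variable side and $(5)$, $(7)$ on the parameter side. I would prove $(3) \Rightarrow (2)$ and $(1) \Rightarrow (3)$ directly on the dp-rank side, handle $(4) \Leftrightarrow (6)$ and $(5) \Leftrightarrow (7)$ by standard extraction and compactness, and then do the bulk of the work in $(2) \Rightarrow (1)$, from which $(7)$ is read off directly and $(6)$ follows via dualization.

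For $(3) \Rightarrow (2)$: immediate from Definition \ref{def:dp-rank}, since $\dpr(p) \geq |T|^+ + |A|^+$ in particular precludes $\dpr(p) < \kappa$ for every cardinal $\kappa$. For $(1) \Rightarrow (3)$, the $\omega$-indexed family is itself indiscernible over $Aa$, so by the Standard Lemma plus compactness the EM-type of the family over $Aa$ is realized by a family of any desired length $\kappa$; mutual indiscernibility over $A$ is expressible by a scheme of sentences in the EM-type of the family and is therefore preserved, and each new sequence shares $\tp(I_0 / Aa)$ and in particular is not indiscernible over $Aa$. For $(6) \Rightarrow (4)$, extract an $A$-indiscernible subsequence from $\{a_i\}$ via the Standard Lemma and choose $c$ by compactness using the $c_s$'s that realize the even-index pattern on finite pieces; conversely, $(4) \Rightarrow (6)$ follows because indiscernibility of $\langle a_i \rangle$ permits any finite alternation pattern to be realized by a permutation of indices, and compactness then produces $c_s$ for every $s \subseteq \omega$. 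The symmetric arguments yield $(5) \Leftrightarrow (7)$.

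The core step is $(2) \Rightarrow (1)$. Given $\dpr(p) = \infty$, take $\kappa = (|T| + |A|)^+$ and obtain $a \models p$ together with mutually indiscernible $\langle I_i \mid i < \kappa \rangle$, each not indiscernible over $Aa$ as witnessed by some $\varphi_i(\bar{y}_0, \ldots, \bar{y}_{n_i - 1}, x, \bar{d}_i)$ with $\bar{d}_i \in A$ distinguishing particular positions of $I_i$. Since there are at most $|T| \cdot |A|$ choices for $(\varphi_i, n_i, \bar{d}_i, \text{positions})$, pigeonhole produces an infinite subfamily sharing a common witness $(\varphi, n, \bar{d}, (j_0, \ldots, j_{n-1}))$; applying the Standard Lemma to the sequence of sequences over $Aa\bar{d}$ then upgrades to an $Aa$-indiscernible subfamily retaining this witness, which is precisely $(1)$. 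The common formula $\varphi$ together with $a$ and the distinguished tuples of the $I_i$'s then reads off $(7)$: the $b_i$'s are the distinguished tuples, and for any $s \subseteq \omega$ an appropriate $d_s \in p(\C)$ is produced by an automorphism over $A$ permuting the family so that $a$ lands in the "positive" positions exactly over the $I_i$ with $i \in s$.

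The anticipated main obstacle is the final passage from $(7)$ to $(6)$, since the role of $p$-realizations must be flipped from the parameter slot to the type-variable slot. Fortunately, the configuration produced by $(7)$ already contains many realizations of $p$ (the $d_s$'s), so my plan would be to extract, via Ramsey, an $A$-indiscernible subsequence $\langle d'_k \rangle \subseteq p(\C)$ and to locate, among the $b_i$'s (or finite tuples built from them), a single parameter exhibiting an even-index alternation against $\langle d'_k \rangle$; compactness then completes the passage to $(6)$ through $(4)$. This symmetrization --- converting the "dp-rank side" $p$-realization into an "IP side" indiscernible sequence of $p$-realizations --- is the delicate point, since one must ensure that the resulting indiscernible sequence genuinely lies in $p(\C)$ rather than merely being indiscernible over $A$.
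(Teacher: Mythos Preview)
Your overall plan follows the paper closely: the cycle among (1), (2), (3) via pigeonhole and Ramsey, the Ramsey equivalences $(4)\Leftrightarrow(6)$ and $(5)\Leftrightarrow(7)$, and the passage $(1)\Rightarrow(5)/(7)$ are all essentially the paper's arguments. Two points need correction.

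First, a minor mislabeling: your ``$(3)\Rightarrow(2)$: immediate'' is false as stated. Having $\dpr(p)\geq |T|^{+}+|A|^{+}$ does not by itself preclude $\dpr(p)<\kappa$ for larger $\kappa$; a priori the rank could be exactly $|T|^{+}+|A|^{+}$. The paper instead does the trivial $(2)\Rightarrow(3)$ and then proves $(3)\Rightarrow(1)$. Your own ``core step $(2)\Rightarrow(1)$'' uses only $\kappa=(|T|+|A|)^{+}$, i.e.\ only (3), so you have actually written the paper's $(3)\Rightarrow(1)$; just relabel it.

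The substantive gap is the dualization $(5)\Rightarrow(6)$ (equivalently $(7)\Rightarrow(6)$). Your Ramsey plan --- extract an $A$-indiscernible subsequence of the $d_{s}$'s and hope some $b_{i}$ alternates against it --- does not work: extracting over $A$ loses all relation to the $b_{i}$'s, while extracting over $A\cup\{b_{i}\}$ homogenizes the truth value of $\psi(b_{i},-)$ and kills alternation. The difficulty you flag (keeping the sequence inside $p(\C)$) is not the issue; EM-extraction preserves $p$. The issue is producing \emph{shattering} of the $p$-realizations, and with only $\omega$ parameters $b_{j}$ you cannot code the $2^{\omega}$ subsets needed.

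The paper's device is a power-set reindexing. Starting from (5), extend the $A$-indiscernible sequence $\langle b_{i}\rangle$ to one indexed by $\mathcal{P}(\omega)$, so that for every $S\subseteq\mathcal{P}(\omega)$ there is $d_{S}\models p$ with $\psi(b_{r},d_{S})\Leftrightarrow r\in S$. Now set $a_{i}:=d_{\{r\subseteq\omega:\,i\in r\}}$; these are realizations of $p$, and for each $r\subseteq\omega$ one has $\psi(b_{r},a_{i})\Leftrightarrow i\in r$, which is exactly (6) with $c_{r}:=b_{r}$. The same argument gives $(4)\Rightarrow(7)$. This trick is the one idea your proposal is missing.
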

\begin{proof}
(1) implies (2) and (2) implies (3) are easy. Assume (3) and show
(1). We can find $a\models p$ and $A$-mutually indiscernible sequences
$\left\langle I_{i}\left|\, i<\left|T\right|^{+}+\left|A\right|^{+}\right.\right\rangle $
such that for all $i$, $I_{i}$ is not indiscernible over $Aa$.
We may assume that the order type of these sequences is $\omega$.
The fact that $I_{i}$ is not indiscernible over $Aa$ is witnessed
by some formula over $A$ and increasing tuples from $I_{i}$, so
we may assume that for infinitely many $i$, the formula is the same,
and the position of these tuples does not depend on $i$ (maybe changing
$a$). Then, by Ramsey and compactness, we may assume that $\left\langle I_{i}\left|\, i<\omega\right.\right\rangle $
is indiscernible over $Aa$. 

(5) follows from (1): Denote $I_{i}=\left\langle a_{i,j}\left|\, j<\omega\right.\right\rangle $.
There is a formula $\psi\left(x,y\right)$ over $A$ and an increasing
tuple $k_{0}<\ldots<k_{n-1}<r_{0}<\ldots<r_{n-1}$ such that, letting
$a_{i,\bar{k}}=\left(a_{i,k_{0}},\ldots,a_{i,k_{n-1}}\right)$ (and
similarly we define $a_{i,\bar{r}}$), $\psi\left(a_{i,\bar{k}},a\right)\land\neg\psi\left(a_{i,\bar{r}},a\right)$
holds for all $i<\omega$. The sequence $\left\langle b_{i}\left|\, i<\omega\right.\right\rangle $
defined by $b_{i}=a_{i,\bar{k}}$ when $i$ is even and $b_{i}=b_{i,\bar{r}}$
when $i$ is odd satisfies (5). The fact that $\psi$ is over $A$
is no problem --- we can add the parameters to $b_{i}$.

(2) follows from (5) is easy by compactness.

(6) is equivalent to (4) and (7) is equivalent to (5) by a standard
application of Ramsey. 

(6) follows from (5): By indiscernibility, we may extend $\left\langle b_{i}\left|\, i<\omega\right.\right\rangle $
to $\left\langle b_{r}\left|\, r\in\SS\left(\omega\right)\right.\right\rangle $
(with some ordering), and so, for every subset $s\subseteq\SS\left(\omega\right)$,
there is some $d_{s}\models p$ such that $\psi\left(b_{r},d_{s}\right)$
iff $r\in s.$ For $i<\omega$, let $d_{i}=d_{\left\{ r\subseteq\omega:\, i\in r\right\} }$.
Then for each subset $r\subseteq\omega$, $\psi\left(b_{r},d_{i}\right)$
iff $i\in r$. This gives us (6). The same exact argument gives that
(7) follows from (4). \end{proof}
\begin{prop}
\label{prop:local char} If $p$ is a dependent type over $A$, then
there is $B\subseteq A$ of size $\left|B\right|\leq\left|T\right|$
such that $p|_{B}$ is dependent. \end{prop}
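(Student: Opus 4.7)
The plan is to prove the proposition by a formula-by-formula compactness reduction, based on the characterization of independence given by Claim~\ref{cla:infinite is bounded}(7). After absorbing any parameters of $\psi$ into its $y$-variable, I can restrict attention to $L$-formulas $\psi(y,x)$ in the pure language, of which there are at most $|T|$.

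For each such $\psi$, I will first observe, by a routine compactness unpacking of condition (7), that $\psi$ witnesses independence of a partial type $q$ if and only if for every $n<\omega$ and every finite $q_0 \subseteq q$, the first-order formula
\[
\Theta_{\psi,q_0,n}(y_0,\ldots,y_{n-1}) := \bigwedge_{s \subseteq n} \exists x \Bigl( \bigwedge_{\phi \in q_0} \phi(x) \wedge \bigwedge_{i \in s} \psi(y_i,x) \wedge \bigwedge_{i \notin s} \neg\psi(y_i,x) \Bigr)
\]
is consistent. Since $p$ is assumed dependent, condition (7) fails for every $\psi$, so for each $\psi$ I will pick some $n_\psi<\omega$ and a finite subtype $p_\psi \subseteq p$ such that $\Theta_{\psi,p_\psi,n_\psi}$ is inconsistent, and let $B_\psi \subseteq A$ be the finite set of parameters occurring in $p_\psi$.

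Then I will set $B := \bigcup_\psi B_\psi$, with the union ranging over the at most $|T|$ formulas $\psi$. Each $B_\psi$ is finite, so $|B| \leq |T|$. To conclude that $p|_B$ is dependent, I will argue that for every $\psi$, the subtype $p_\psi$ is still contained in $p|_B$ (since $B_\psi \subseteq B$), so the same formula $\Theta_{\psi,p_\psi,n_\psi}$ remains inconsistent and witnesses the failure of condition (7) for $\psi$ over $p|_B$. Because this holds uniformly in $\psi$, Claim~\ref{cla:infinite is bounded} yields that $p|_B$ is dependent.

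The only non-trivial step is the compactness reduction that isolates a single finite $p_\psi$ per formula $\psi$; everything else is cardinal bookkeeping, exploiting that a union indexed by the at most $|T|$ formulas of $L$ of finite sets has size at most $|T|$.
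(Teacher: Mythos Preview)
Your proof is correct and follows essentially the same route as the paper's own argument: a per-formula compactness reduction yielding a finite $B_\psi\subseteq A$ for each $L$-formula, followed by taking $B=\bigcup_\psi B_\psi$. The only cosmetic difference is that the paper invokes condition~(6) of Claim~\ref{cla:infinite is bounded} (realizations of $p$ on the $x$-side) while you use the dual condition~(7) (realizations of $p$ on the $d_s$-side); the compactness bookkeeping is identical in both cases.
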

\begin{proof}
By Claim \ref{cla:infinite is bounded} (6), it cannot be that there
exists a formula $\varphi\left(x,y\right)$ and a set $\left\{ a_{i}\left|\, i<\omega\right.\right\} $
of realizations of $p$ such that for each $s\subseteq\omega$, there
is some $c_{s}$ such that $\varphi\left(a_{i},c_{s}\right)$ holds
iff $i\in s$. By compactness, there is no formula $\varphi\left(x,y\right)$
such that for all finite $B\subseteq A$ we can find such a set $\left\{ a_{i}\left|\, i<\omega\right.\right\} $
of realizations of $p|_{B}$ and such $c_{s}$ for $s\subseteq\omega$.
So for each formula $\varphi\left(x,y\right)$, there is some finite
$B_{\varphi}\subseteq A$ such that there is no such set. Let $B=\bigcup_{\varphi}B_{\varphi}$.
Then $p|_{B}$ is easily seen to be dependent. \end{proof}
\begin{cor}
\label{cor:infinite is even more bounded}The following are equivalent
for a type $p\left(x\right)$ over $A$:
\begin{enumerate}
\item $p\left(x\right)$ is independent.
\item \textup{$\dpr\left(p\right)\geq\left|T\right|^{+}$.}
\end{enumerate}
\end{cor}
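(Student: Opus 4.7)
The plan is to derive the Corollary from Claim \ref{cla:infinite is bounded} together with the local character established in Proposition \ref{prop:local char}. The implication $(1)\Rightarrow(2)$ is immediate from Claim \ref{cla:infinite is bounded}: if $p$ is independent then $\dpr(p) \geq |T|^+ + |A|^+ \geq |T|^+$, so there is nothing to do in this direction.

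For $(2)\Rightarrow(1)$ I would argue contrapositively. Suppose $p$ is dependent over $A$, and let $B \subseteq A$ be the subset of size $\leq |T|$ furnished by Proposition \ref{prop:local char}, so that $p|_B$ is still dependent over $B$. Applying the equivalence $(2)\Leftrightarrow(3)$ of Claim \ref{cla:infinite is bounded} to $p|_B$, now viewed as a type over the base $B$, I get $\dpr(p|_B) < |T|^+ + |B|^+$. Since $|B| \leq |T|$, we have $|B|^+ \leq |T|^+$, so this simplifies to $\dpr(p|_B) < |T|^+$.

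It remains to transfer this bound back to $p$ itself. Since $p \supseteq p|_B$, the monotonicity of dp-rank under extensions noted just after Remark \ref{rem:Base Set} gives $\dpr(p) \leq \dpr(p|_B) < |T|^+$, which contradicts $(2)$ and completes the proof. The whole argument is essentially bookkeeping around the two cited results; there is no real obstacle once one observes that cutting the base $A$ down to a subset of size $\leq |T|$ is precisely what is needed to absorb the $|A|^+$ term appearing in Claim \ref{cla:infinite is bounded}.
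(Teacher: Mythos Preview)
Your proof is correct and follows essentially the same route as the paper: reduce to a small base via Proposition \ref{prop:local char}, apply Claim \ref{cla:infinite is bounded} to bound $\dpr(p|_B)$, and then use monotonicity of dp-rank under extensions to transfer the bound back to $p$. The paper's version is terser but the argument is identical.
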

\begin{proof}
If $p$ is dependent, then there is some $B\subseteq A$ such that
$p|_{B}$ is dependent and $\left|B\right|\leq\left|T\right|$. By
Claim \ref{cla:infinite is bounded} (3), this means that $\dpr\left(p|_{B}\right)<\left|T\right|^{+}$,
so $\dpr\left(p\right)<\left|T\right|^{+}$. 
\end{proof}
In this section we show that some useful properties that are true
in dependent theories are actually true in the local context as well. 
\begin{fact}
\label{fac:2variablesDep}\cite[Theorem 4.11]{AlexAlfTemp} If $p$
is a dependent type over $A$, and $a_{i}\models p$ for $i<n<\omega$,
then $\tp\left(a_{0},\ldots,a_{n-1}/A\right)$ is also dependent.
\end{fact}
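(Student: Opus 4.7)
The plan is to proceed by induction on $n$. The case $n=1$ is immediate, since $\tp(a_0/A)$ extends $p$ and is therefore dependent by Remark~\ref{prop:extension of dependent}. For the inductive step I assume $q := \tp(a_0, \ldots, a_{n-2}/A)$ is dependent, and it suffices to prove the following two-variable ``subadditivity'' statement: if $q(\bar x)$ and $r(y)$ are dependent types over $A$, $\bar a \models q$, and $b \models r$, then $\tp(\bar a b / A)$ is dependent. Applying this with $r := \tp(a_{n-1}/A)$ (which extends $p$ and is therefore dependent), $\bar a := (a_0, \ldots, a_{n-2})$ and $b := a_{n-1}$ gives the induction step.

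To prove the two-variable statement I argue by contradiction. Suppose $\tp(\bar a b / A)$ is independent. By Claim~\ref{cla:infinite is bounded}(1) there is a realization $(\bar a', b') \models \tp(\bar a b / A)$ and an $A$-mutually indiscernible family $\langle I_i : i < \omega \rangle$ such that $\langle I_i \rangle$ is indiscernible over $A \bar a' b'$ while each $I_i$ is not. For each $i$, the non-indiscernibility of $I_i$ is witnessed by a formula over $A$ involving only finitely many coordinates from $\bar a' b'$, so by pigeonhole, after passing to an infinite subsequence, the same minimal subset $S \subseteq \{\bar a', b'\}$ witnesses the non-indiscernibility of every $I_i$.

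If $S = \{b'\}$ then $\langle I_i \rangle$ is $A$-mutually indiscernible, indiscernible over $A b'$, and each $I_i$ is not indiscernible over $A b'$; since $b' \models r$, this is exactly the data of Claim~\ref{cla:infinite is bounded}(1) applied to $r$, contradicting dependence of $r$. The case $S = \{\bar a'\}$ is symmetric, using dependence of $q$ and $\bar a' \models q$. The remaining case $S = \{\bar a', b'\}$ --- each $I_i$ indiscernible over both $A \bar a'$ and $A b'$ but not over $A \bar a' b'$ --- is the substantive one.

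I expect this last case to be the main obstacle. My strategy is to apply a further Ramsey/Morley extraction, exploiting that each $I_i$ is already $A \bar a'$-indiscernible, to upgrade the family $\langle I_i \rangle$ to one that is $A \bar a'$-mutually indiscernible and still indiscernible over $A \bar a' b'$ as a sequence, while preserving the failure of indiscernibility of each member over $A \bar a' b'$. Once this is arranged, the upgraded family witnesses (via Claim~\ref{cla:infinite is bounded}(1)) that $\tp(b' / A \bar a')$ is independent; but this type is an extension of the dependent type $r$, and hence dependent by Remark~\ref{prop:extension of dependent} --- the desired contradiction. Verifying that the extraction really preserves the joint non-indiscernibility, rather than inadvertently destroying it, is the delicate technical step on which the whole argument rests.
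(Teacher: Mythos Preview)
The paper does not supply its own proof of this statement --- it is recorded as a Fact with a citation to \cite{AlexAlfTemp} --- so there is nothing in the paper to compare against directly. I will evaluate your argument on its own terms.

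Your inductive reduction to a two-variable subadditivity statement is correct and is the natural shape of the proof. The difficulty you isolate in the ``both'' case is genuine, and your proposal does not close it. A Ramsey extraction of $\langle I_i\rangle$ over $A\bar a'$ controls only the Ehrenfeucht--Mostowski type over $A\bar a'$; it says nothing about the relation of the extracted family to $b'$, so there is no reason the non-indiscernibility of each $I_i$ over $A\bar a' b'$ should survive. Knowing that each $I_i$ is $A\bar a'$-indiscernible and that the family is $A$-mutually indiscernible does not combine to give $A\bar a'$-mutual indiscernibility either. As written, the argument has a real gap here.

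There is a cleaner route that avoids characterization (1) of Claim~\ref{cla:infinite is bounded} altogether. Use (5): if $\tp(\bar a b/A)$ is independent, take an $A$-indiscernible sequence $\langle c_i : i < |T|^+\rangle$, a formula $\psi$, and $(\bar a', b')\models\tp(\bar a b/A)$ with $\psi(c_i,\bar a',b')$ holding iff $i$ is even. Since $r=\tp(b'/A)$ is dependent, some end-segment $\langle c_i : i>\alpha\rangle$ is indiscernible over $Ab'$; this is shrinking for a \emph{single} added element, which follows directly from the failure of (5) for $r$ and does not require the fact you are proving (the appeal to Fact~\ref{fac:2variablesDep} in the proof of Proposition~\ref{prop:(shrinking-of-indiscernibles)} is only needed when $|B|>1$). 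On that end-segment the alternation persists; regarding $\psi(-,-,b')$ as a formula over $Ab'$, this is precisely the data of (5) for $q$ over the base $Ab'$, contradicting dependence of $q$ (unchanged under enlarging the base by Remark~\ref{rem:Base Set}). This ``add one parameter and shrink'' mechanism is exactly what the paper exploits in Theorem~\ref{thm:MainA}, and it sidesteps the extraction problem entirely.
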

Recall the notions of forking and dividing. All the definitions and
properties we need can be found in \cite{Kachernikov}. 
\begin{prop}
\label{prop:non-forking}If $p$ is dependent type over a model $M$,
and $q$ is a global non-forking extension of $p$ (i.e. an extension
to $\C$), then $q$ is invariant over $M$.\end{prop}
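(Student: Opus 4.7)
The plan is to argue by contradiction: assume $q$ is not $M$-invariant and derive that $p$ is independent, contradicting the hypothesis. By non-invariance, fix a formula $\varphi(x,y)$ and tuples $b \equiv_M b'$ with $\varphi(x,b) \in q$ and $\neg\varphi(x,b') \in q$. Equivalently, $\psi(x,y,y') := \varphi(x,y) \wedge \neg\varphi(x,y')$ satisfies $\psi(x,b,b') \in q$. For any $M$-automorphism $\sigma$ with $\sigma(b) = b'$ (which exists since $b \equiv_M b'$), the global type $\sigma(q)$ is another non-forking extension of $p$, disagreeing with $q$ on $\varphi(x,b)$; this provides two distinct $p$-extensions on which to draw.

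First I would produce, via Ramsey and compactness over the model $M$, an $M$-indiscernible sequence of pairs $\langle (b_i,b'_i) : i<\omega\rangle$ with $(b_0,b'_0) = (b,b')$ and each pair $\equiv_M (b,b')$. Then, using non-forking of $q$ over $M$ --- specifically, that $\psi(x,b,b')$ does not divide over $M$ --- the set $\{\psi(x,b_i,b'_i) : i<\omega\}$ is consistent. A further step, using $M$-indiscernibility of the pair sequence together with the fact that $q$ is a global non-forking extension (rather than merely a formula that does not divide), should yield a realization $a \models p$ satisfying $\psi(a,b_i,b'_i)$ for all $i$.

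Next, I would manufacture for every $s \subseteq \omega$ a realization $d_s \models p$ such that $\varphi(d_s,b_i) \iff i \in s$, for the fixed set $\{b_i : i < \omega\}$ constructed above. By Claim~\ref{cla:infinite is bounded}(7) applied with formula $\varphi$, this would exhibit $p$ as independent and give the desired contradiction. Observe that the single realization $a$ from the previous step already shatters the varying selector sequences $(c^s_i)_i$ with $c^s_i \in \{b_i, b'_i\}$ chosen per $s$; the issue is to convert this into shattering of a fixed set $\{b_i\}$ by varying the realization.

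I expect the main obstacle to be this final conversion: going from a fixed $a$ shattered by varying selectors to a family $(d_s)$ shattering a fixed parameter set. The natural route uses $M$-automorphisms that swap $b_i$ and $b'_i$ within selected pairs, which demands $\tp(b,b'/M) = \tp(b',b/M)$ --- a symmetry that is not automatic. To handle this, I would either enrich the indiscernible construction to include both orientations of the pair (so that swaps extend to $M$-automorphisms), or replace Claim~\ref{cla:infinite is bounded}(7) by Claim~\ref{cla:infinite is bounded}(5), which only asks for an alternating pattern $\psi(b_i,d) \iff i$ even and can be built from the asymmetric data directly using $q$ and $\sigma(q)$ along the pair sequence. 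Fact~\ref{fac:2variablesDep} on dependence of $2$-variable types underwrites these constructions by controlling the joint types encountered along the way.
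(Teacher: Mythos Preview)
Your proposal heads in the right contrapositive direction but stalls exactly where you flag it, and the patches you suggest do not close the gap. From an $M$-indiscernible sequence of \emph{pairs} $\langle(b_i,b'_i)\rangle$ together with a single $a\models p$ satisfying $\varphi(a,b_i)\wedge\neg\varphi(a,b'_i)$ for all $i$, you cannot feed Claim~\ref{cla:infinite is bounded}(5): that clause requires a single $M$-indiscernible sequence $\langle e_i\rangle$ along which the truth value \emph{alternates}, and the interleaving $b_0,b'_0,b_1,b'_1,\ldots$ is not $M$-indiscernible in general. Your swap idea needs $\tp(b,b'/M)=\tp(b',b/M)$, which as you note is not automatic; and even granting such a swap $\tau$, you obtain only two realizations $a,\tau(a)$, far from the family $(d_s)_{s\subseteq\omega}$ that clause~(7) demands. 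Fact~\ref{fac:2variablesDep} does not help here: the obstruction is combinatorial, not about dependence of a product type.

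The paper avoids all of this by working with a \emph{single} $M$-indiscernible sequence $\langle c_i\rangle$ extending the original witnesses $c_0,c_1$ (the ``standard technique'' available once $c_0\equiv_M c_1$), and then running the implication in the opposite direction from yours. Since $p$ is dependent, Claim~\ref{cla:infinite is bounded}(5) forces $p(x)\cup\{\varphi(x,c_i)^{(i\text{ even})}:i<\omega\}$ to be inconsistent; compactness then yields $\psi\in p$ with $\{\psi(x)\wedge\varphi(x,c_{2i})\wedge\neg\varphi(x,c_{2i+1}):i<\omega\}$ inconsistent; and because $\langle(c_{2i},c_{2i+1})\rangle_i$ is $M$-indiscernible, this says $\psi(x)\wedge\varphi(x,c_0)\wedge\neg\varphi(x,c_1)\in q$ divides over $M$, contradicting non-forking. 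The single-sequence setup is precisely what makes the alternation clause and the dividing witness line up; your pair setup breaks that alignment. The minimal repair of your approach is to build the single sequence first and observe that non-forking of $q$ would force the alternating type to be consistent with $p$---which \emph{is} clause~(5) and hence independence---i.e.\ the paper's proof read contrapositively.
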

\begin{proof}
Suppose that $\varphi\left(x,c_{0}\right)\land\neg\varphi\left(x,c_{1}\right)\in q$
where $c_{0}\equiv_{M}c_{1}$. Then using a standard technique, we
can assume that $c_{0},c_{1}$ start an indiscernible sequence $\left\langle c_{0},c_{1},\ldots\right\rangle $
over $M$. The set 
\[
p\left(x\right)\cup\left\{ \varphi\left(x,c_{i}\right)^{\left(i\mbox{ is even}\right)}\left|\, i<\omega\right.\right\} 
\]
 is inconsistent by Claim \ref{cla:infinite is bounded}. This means
that for some formula $\psi\left(x\right)\in p$, 
\[
\left\{ \psi\left(x\right)\wedge\varphi\left(x,c_{2i}\right)\wedge\neg\varphi\left(x,c_{2i+1}\right)\left|\, i<\omega\right.\right\} 
\]
 is inconsistent, and so $\psi\left(x\right)\wedge\varphi\left(x,c_{0}\right)\wedge\neg\varphi\left(x,c_{1}\right)$
divides over $M$ --- contradiction. \end{proof}
\begin{prop}
\label{prop:(shrinking-of-indiscernibles)}(shrinking of indiscernibles)
Suppose that $p\left(x\right)$ is a dependent type over $A$ and
that $B$ is a set of realizations of $p$. 

If $I=\left\langle a_{i}\left|\, i<\left|T\right|^{+}+\left|B\right|^{+}\right.\right\rangle $
is an $A$-indiscernible sequence, then some end-segment is indiscernible
over $AB$. Note that the size of $A$ and the size of the tuple $a_{i}$
do not matter.\end{prop}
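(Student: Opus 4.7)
The plan is to reduce to a single finite tuple $\bar{b}\in B$ by a cofinality argument, pass to the dependent type $q = \tp(\bar{b}/A)$ using Fact~\ref{fac:2variablesDep}, and derive a contradiction from Claim~\ref{cla:infinite is bounded}(5) by constructing an alternating configuration along an $A$-indiscernible sequence extracted from $I$.

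For the reduction, the set of finite tuples from $B$ has cardinality at most $|B|+\aleph_{0}$, while $|T|^{+}+|B|^{+}=\max(|T|^{+},|B|^{+})$ is a regular cardinal strictly larger than $|B|+\aleph_{0}$, so if for each finite $\bar{b}\in B$ there is $\alpha_{\bar{b}}$ with $I_{\geq\alpha_{\bar{b}}}$ indiscernible over $A\bar{b}$, then $\alpha^{*}:=\sup_{\bar{b}}\alpha_{\bar{b}}<|T|^{+}+|B|^{+}$ and $I_{\geq\alpha^{*}}$ is indiscernible over $AB$. Now fix a finite $\bar{b}=(b_{0},\ldots,b_{k-1})\in B$; by Fact~\ref{fac:2variablesDep}, $q=\tp(\bar{b}/A)$ is dependent. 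Assume for contradiction that no end-segment of $I$ is $A\bar{b}$-indiscernible. For each $\alpha<|T|^{+}+|B|^{+}$, a witness of failure can be written as $\varphi_{\alpha}(\bar{x},\bar{c}_{\alpha},\bar{b})$ with $\varphi_{\alpha}(\bar{x},\bar{z},\bar{y})$ in the language of $T$, $\bar{c}_{\alpha}\in A$, and increasing $n_{\alpha}$-tuples $\bar{i}_{\alpha},\bar{j}_{\alpha}\in I_{\geq\alpha}$ for which $\varphi_{\alpha}(a_{\bar{i}_{\alpha}},\bar{c}_{\alpha},\bar{b})\neq\varphi_{\alpha}(a_{\bar{j}_{\alpha}},\bar{c}_{\alpha},\bar{b})$. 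Since there are only $|T|$ formulas in the language and $|T|$ is below the cofinality of $|T|^{+}+|B|^{+}$, pigeonhole yields a fixed formula $\varphi$ of some arity $n$ witnessing failure for cofinally many $\alpha$.

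Now construct by recursion on $k<\omega$ disjoint, increasing $n$-tuples $\bar{i}^{0}<\bar{j}^{0}<\bar{i}^{1}<\bar{j}^{1}<\cdots$ in $I$ and parameters $\bar{c}^{k}\in A$ with $\varphi(a_{\bar{i}^{k}},\bar{c}^{k},\bar{b})$ true and $\varphi(a_{\bar{j}^{k}},\bar{c}^{k},\bar{b})$ false (both truth values occur on $I_{\geq\alpha}$ for each cofinal $\varphi$-witness $\alpha$; passing to a sub-case by pigeonhole if needed to ensure the ``true before false'' arrangement). Setting $\bar{G}^{2k}:=a_{\bar{i}^{k}}$ and $\bar{G}^{2k+1}:=a_{\bar{j}^{k}}$, the sequence $\langle\bar{G}^{m}\mid m<\omega\rangle$ is $A$-indiscernible as an increasing sequence of disjoint $n$-tuples drawn from the $A$-indiscernible $I$, and $\varphi(\bar{G}^{m},\bar{c}^{\lfloor m/2\rfloor},\bar{b})$ alternates with the parity of $m$.

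The main obstacle is that the parameters $\bar{c}^{k}$ may vary with $k$, whereas Claim~\ref{cla:infinite is bounded}(5) requires alternation of a formula with a single fixed parameter along an $A$-indiscernible sequence. To resolve this, I would apply Proposition~\ref{prop:local char} to find $B_{0}\subseteq A$ of size $\leq|T|$ with $p|_{B_{0}}$ dependent, pigeonhole the $B_{0}$-type of $\bar{c}^{k}$ across $k$ to stabilize it on an infinite subsequence, and then use a Ramsey--compactness argument together with the saturation of $\C$ to replace the varying $\bar{c}^{k}$'s by a single parameter $\bar{c}^{*}\in\C$ and a conjugate $\bar{b}^{*}\models q$, while preserving the $A$-indiscernibility of (a conjugate of) $\langle\bar{G}^{m}\rangle$ and the alternating pattern of $\varphi$. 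The resulting alternation $\varphi(\bar{G}^{m},\bar{c}^{*},\bar{b}^{*})$ along an $A$-indiscernible sequence with $\bar{b}^{*}\models q$ contradicts Claim~\ref{cla:infinite is bounded}(5) applied to the dependent type $q$, completing the proof.
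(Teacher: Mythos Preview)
Your reduction to a finite tuple $\bar b\subseteq B$ via cofinality, followed by the appeal to Fact~\ref{fac:2variablesDep} to conclude that $q=\tp(\bar b/A)$ is dependent, is exactly what the paper does; the paper then just says the conclusion ``easily follows from Corollary~\ref{cor:infinite is even more bounded}'' and gives no further details. So your skeleton matches, and the obstacle you isolate --- that the $A$-parameters $\bar c^{k}$ in the witnessing formulas may vary with $k$ --- is real and is precisely the content the paper leaves implicit.

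Your proposed resolution of that obstacle, however, does not work as written. First, the pigeonhole on $\tp(\bar c^{k}/B_{0})$ is unjustified: you constructed only $\omega$ many indices $k$, while there may be $2^{|T|}$ complete types over $B_{0}$; even had you produced $|T|^{+}$ many disjoint witnesses from $I$, one can have $2^{|T|}\geq|T|^{+}$, so no infinite fibre is guaranteed. Second, and more seriously, the Ramsey--compactness step cannot deliver what you claim. Any extraction carried out over $B_{0}$ yields at best $B_{0}$-indiscernibility and a realization of $q|_{B_{0}}$, not the $A$-indiscernibility and $\bar b^{*}\models q$ that you assert; yet you explicitly invoke Claim~\ref{cla:infinite is bounded}(5) for $q$ over $A$. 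Even if you retarget to $q|_{B_{0}}$, you still need a \emph{single} $B_{0}$-indiscernible sequence on which one formula with one external parameter alternates. The natural ``unrolled'' candidate $m\mapsto\bigl(G^{m},\bar c^{\lfloor m/2\rfloor}\bigr)$ is not indiscernible over $B_{0}$ (consecutive terms inside a block share the $\bar c$-coordinate, terms straddling blocks need not), and if instead you pass to a $B_{0}$-indiscernible sequence of triples $\langle(H^{k}_{0},H^{k}_{1},\bar d^{k})\rangle_{k}$ you lose the alternation, since $\varphi(H^{k}_{0},\bar d^{k},\bar b')$ is then constantly true and $\varphi(H^{k}_{1},\bar d^{k},\bar b')$ constantly false. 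Finally, the compactness you would need --- a single $\bar c^{*}$ making $\varphi(G^{m},\bar c^{*},\bar b^{*})$ alternate for arbitrarily many $m$ --- does not reduce to the finite data you have, because for each finite $N$ you only possess $N$ \emph{different} parameters $\bar c^{0},\ldots,\bar c^{N-1}$, each producing one alternation in a different window. As it stands the last paragraph is a sketch of a hope rather than an argument, and the proof is incomplete at exactly the point where the work lies.
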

\begin{proof}
We may assume that $B$ is finite. The type $\tp\left(B/A\right)$
is dependent by Fact \ref{fac:2variablesDep}. The proof easily follows
from Corollary \ref{cor:infinite is even more bounded}. 
\end{proof}

\subsection{Proof of Main Theorem \ref{MainThmInDependent}}
\begin{defn}
Let $p\left(x\right)$ be a type over $A$. We say that $p$ is\emph{
$1$-independent over $A$ }if there is a realization $a\models p$
and $A$-mutually indiscernible sequences $\left\langle I_{i}\left|\, i<\omega\right.\right\rangle $
of \uline{singletons} such that the sequence $\left\langle I_{i}\left|\, i<\omega\right.\right\rangle $
is indiscernible over $Aa$ and for each $i<\omega$, $I_{i}$ is
not indiscernible over $Aa$. 

We say that $p$ is \emph{$1$-dependent} \emph{over $A$} if it is
not $1$-independent over $A$. We say that $p$ is \emph{$1$-dependent}
if it is $1$-dependent over any $A'\supseteq A$ such that $A'\backslash A$
is finite.
\end{defn}
Observe that by Claim \ref{cla:infinite is bounded}, if $p\left(x\right)$
is dependent then it is $1$-dependent. Also, as in Remark \ref{rem:Base Set},
this definition does not depend on $A$. 
\begin{claim}
\label{cla:1-dep-shrinking}If $p\left(x\right)$ is a type over $A$
which is $1$-dependent, then:
\begin{itemize}
\item For every $A'\supseteq A$ such that $A'\backslash A$ is finite,
every $A'$-indiscernible sequence 
\[
\left\langle a_{i}\left|\, i<\left|T\right|^{+}+\left|A\right|^{+}\right.\right\rangle 
\]
 of tuples satisfying $p$ and singleton $c$, there is some $\alpha<\left|T\right|^{+}+\left|A\right|^{+}$
such that the end-segment $\left\langle a_{i}\left|\,\alpha<i\right.\right\rangle $
is indiscernible over $A'c$.
\end{itemize}
\end{claim}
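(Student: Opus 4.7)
The plan is to argue by contradiction. Suppose some $A' \supseteq A$ with $A'\setminus A$ finite, an $A'$-indiscernible sequence $\langle a_i : i < |T|^+ + |A|^+\rangle$ of realizations of $p$, and a singleton $c$ together witness the failure, i.e.\ no end-segment of $\langle a_i\rangle$ is indiscernible over $A'c$. The goal is to extract from this data a $1$-independence witness for $p$ over some finite expansion of $A$, contradicting the hypothesis.

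First I would extract a localized alternation. By pigeonhole on the at most $|T|+|A|$ formulas over $A'$, together with Ramsey and compactness, one obtains a formula $\varphi(\bar x, y)$ over $A'$ with $y$ a single variable for $c$, and an $A'$-indiscernible sequence $\langle \bar b_i : i < \omega\rangle$ of increasing $n$-tuples drawn from $\langle a_j\rangle$, such that $\varphi(\bar b_i, c)$ alternates in $i$. Write $q = \tp(\bar b_0/A')$; the type $q$ extends the $n$-fold product $p(x_1) \wedge \cdots \wedge p(x_n)$, and the data above realizes item~(4) of Claim~\ref{cla:infinite is bounded} applied to $q$ with external parameter $c$. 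Because the equivalences $(4)\Leftrightarrow(6)\Leftrightarrow(7)\Leftrightarrow(5)$ in that claim are established purely via Ramsey and compactness and do not invoke dependence, I can swap sides to extract item~(5) for $q$: an $A'$-indiscernible sequence of \emph{singletons} $\langle c_j\rangle$, each realizing $\tp(c/A')$, together with a realization $\bar b = (b_1, \dots, b_n) \models q$ satisfying that $\varphi(\bar b, c_j)$ alternates in $j$. A further compactness argument, treating the alternation as a finitely satisfiable constraint, stretches the singleton sequence to any desired length; take $\kappa := |T|^+ + |A|^+$.

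Next I would establish and apply iteratively the dual form of shrinking. The sub-claim is: for every finite $A'' \supseteq A$, every $A''$-indiscernible sequence of singletons of length $|T|^+ + |A|^+$, and every $a \models p$, some end-segment is indiscernible over $A''a$. This sub-claim is proved exactly as the first step, but in reverse: failure would yield, via pigeonhole/Ramsey/compactness, an $A''$-indiscernible singleton sequence together with a formula alternating against $a \models p$, which is precisely a $1$-independence witness for $p$ over $A''$, contradicting the $1$-dependence hypothesis. Apply this sub-claim $n$ times to $\langle c_j\rangle$, taking $A''$ successively as $A',\, A'b_1,\, \dots,\, A'b_1\cdots b_{n-1}$ and $a$ as $b_1, \dots, b_n$. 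Each such $A''$ is a finite expansion of $A$, each shrinking preserves the length $\kappa$ (as $\kappa$ is an infinite cardinal), and after $n$ steps the resulting end-segment of $\langle c_j\rangle$ is indiscernible over $A'\bar b$. Then $\varphi(\bar b, c_j)$ is eventually constant on this end-segment, contradicting the alternation.

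The hard part will be the swap step. It succeeds only because the equivalences of Claim~\ref{cla:infinite is bounded} are purely Ramsey-theoretic and hence apply to any type $q$, not only to dependent ones; this allows moving the $n$-tuple of realizations of $p$ from the alternating-sequence side to the parameter side, leaving singletons to form the new indiscernible sequence. Once that swap is in hand, the iterated dual shrinking is routine.
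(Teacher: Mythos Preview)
Your argument is correct and follows the same two-step strategy as the paper: first swap sides via the equivalences of Claim~\ref{cla:infinite is bounded} so that the indiscernible sequence consists of singletons while the fixed parameter becomes an $n$-tuple $\bar b=(b_1,\dots,b_n)$ of realizations of $p$; then peel off the coordinates of $\bar b$ one at a time using $1$-dependence. The implementations of the second step differ. The paper indexes the singleton sequence by $\omega^{n+1}$ with the lexicographic order, arranges (via Ramsey) the outer sequence of $\omega$-blocks to be indiscernible over $\bar a_0$, and then applies the \emph{definition} of $1$-dependence verbatim at each of the $n$ levels to select one block that is indiscernible over the next coordinate. You instead stretch the singleton sequence to length $|T|^{+}+|A|^{+}$, isolate an end-segment shrinking sub-claim, and apply it $n$ times. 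Both routes work; the paper's tree argument is a bit more direct since it never leaves $\omega$-indexed objects and uses $1$-dependence in its raw form rather than through a derived shrinking statement.

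Two small points. First, your parenthetical that each $c_j$ realizes $\tp(c/A')$ is not guaranteed by the $(4)\Rightarrow(7)\Rightarrow(5)$ passage (the auxiliary elements produced there need not share the type of $c$), but you never actually use this, only that the $c_j$ are singletons. Second, in your sub-claim the formula witnessing failure of end-segment indiscernibility may involve several singleton variables, so what you extract is an alternation of $m$-tuples of singletons rather than of single singletons; this still gives a $1$-independence witness (group consecutive pairs of such $m$-tuples into finite sequences $I_k$ of singletons, then apply Ramsey to the outer sequence over $A''a$), but it is not quite ``precisely a $1$-independence witness'' as you wrote.
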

\begin{proof}
To simplify notations, assume $A=A'=\emptyset$. Towards a contradiction
we find a formula $\varphi\left(\bar{x},y\right)$ and an indiscernible
sequence $\left\langle \bar{a}_{i}\left|\, i<\omega\right.\right\rangle $
such that $\bar{a}_{i}$ is a tuple of length $n$ of tuples satisfying
$p$ and $\varphi\left(\bar{a}_{i},c\right)$ holds iff $i$ is even.
By the proof of Claim \ref{cla:infinite is bounded} (i.e. (5) implies
(4), with $p=\tp\left(c\right)$), there is an indiscernible sequence
$\left\langle c_{\bar{i}}\left|\,\bar{i}\in\omega^{n+1}\right.\right\rangle $
(ordered lexicographically) of singletons such that $\varphi\left(\bar{a}_{0},c_{\bar{i}}\right)$
holds iff the last number in $\bar{i}$ is even. We may also assume
(by Ramsey) that the sequence $\left\langle \bar{c}_{\bar{i}}\left|\,\bar{i}\in\omega^{n}\right.\right\rangle $
is indiscernible over $\bar{a}_{0}$, where $\bar{c}_{\bar{i}}=\left\langle c_{\bar{i}\concat j}\left|\, j<\omega\right.\right\rangle $. 

Suppose $\bar{a}_{0}=\left(a_{0,0},\ldots,a_{0,n-1}\right)$ where
$a_{0,i}\models p$. Since $p$ is $1$-dependent over $\emptyset$,
there is some $i_{0}<\omega$ such that $\left\langle c_{i_{0}\concat\bar{i}}\left|\,\bar{i}\in\omega^{n}\right.\right\rangle $
is indiscernible over $a_{0,0}$. By assumption, $p$ is $1$-dependent
over $a_{0,0}$. Inductively, we can find $i_{1},\ldots,i_{n-1}<\omega$
such that $\bar{c}_{\left(i_{0},\ldots,i_{n-1}\right)}$ is indiscernible
over $\bar{a}_{0}$ --- contradiction. 
\end{proof}
The following theorem implies Main Theorem \ref{MainThmInDependent}:
\begin{thm}
\label{thm:MainA}If $p\left(x\right)$ is a type over $A$ which
satisfies the conclusion of Claim \ref{cla:1-dep-shrinking}, then
it is dependent.\end{thm}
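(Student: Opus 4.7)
The plan is to establish the contrapositive: if $p$ is independent, then the shrinking property in the conclusion of Claim \ref{cla:1-dep-shrinking} must fail. By Claim \ref{cla:infinite is bounded}(4), fix an $A$-indiscernible sequence $\langle a_i \mid i<\omega\rangle$ of realizations of $p$, a formula $\varphi(x,\bar y)$, and a parameter \emph{tuple} $\bar c$ such that $\varphi(a_i,\bar c)$ holds if and only if $i$ is even. Since the hypothesis only allows a singleton parameter $c$, the whole game is to reduce the length of $\bar c$ to $1$.

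The first step is to stretch this configuration to length $\lambda:=|T|^{+}+|A|^{+}$ while keeping the alternation persistent. Fix $\epsilon:\lambda\to\{0,1\}$ with both fibers cofinal (for instance, $\epsilon(\gamma+2k):=0$ and $\epsilon(\gamma+2k+1):=1$ for $\gamma$ a limit or zero and $k<\omega$). Consider the partial type over $A$ in variables $\{x_i\mid i<\lambda\}\cup\{\bar y\}$ consisting of the EM-type of $\langle a_i\mid i<\omega\rangle$ together with $\varphi(x_i,\bar y)^{\epsilon(i)}$ for each $i<\lambda$. Finite consistency is immediate: given any finite $F=\{i_1<\cdots<i_m\}\subseteq\lambda$, pick strictly increasing $r_1<\cdots<r_m$ in $\omega$ with $r_k\equiv\epsilon(i_k)\pmod 2$; then by $A$-indiscernibility the tuple $(a_{r_1},\ldots,a_{r_m})$ realizes the relevant restriction of the EM-type, and $\bar c$ itself handles the parity clauses. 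Compactness produces an $A$-indiscernible sequence $\langle a'_i\mid i<\lambda\rangle$ of realizations of $p$ and a tuple $\bar c'=(c'_0,\ldots,c'_{n-1})$ with $\varphi(a'_i,\bar c')^{\epsilon(i)}$ for all $i<\lambda$; in particular both truth values of $\varphi(-,\bar c')$ appear cofinally on every end-segment.

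The heart of the argument is an iterated shrinking which absorbs the coordinates of $\bar c'$ into the base one at a time. Set $A^{(0)}:=A$ and $\langle a^{(0)}_i\rangle:=\langle a'_i\rangle$. At step $k\in\{1,\ldots,n\}$, the sequence $\langle a^{(k-1)}_i\mid i<\lambda\rangle$ is $A^{(k-1)}$-indiscernible and consists of realizations of $p$, while $A^{(k-1)}\setminus A=\{c'_0,\ldots,c'_{k-2}\}$ is finite, so the hypothesis applied with the singleton $c'_{k-1}$ gives $\alpha_k<\lambda$ such that the end-segment $\langle a^{(k-1)}_i\mid i>\alpha_k\rangle$ is indiscernible over $A^{(k)}:=A^{(k-1)}\cup\{c'_{k-1}\}$. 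Reindex this end-segment as $\langle a^{(k)}_i\mid i<\lambda\rangle$; this is legitimate since $\lambda$ is regular and $\alpha_k<\lambda$. The cofinal alternation of $\varphi(-,\bar c')$ survives each such truncation automatically, because both fibers of $\epsilon$ are cofinal in $\lambda$.

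After $n$ iterations, $A^{(n)}$ contains every coordinate of $\bar c'$, so the $A^{(n)}$-indiscernibility of $\langle a^{(n)}_i\mid i<\lambda\rangle$ forces $\varphi(-,\bar c')$ to have a constant truth value along the sequence, contradicting the persistent cofinal appearance of both truth values. I expect the only step with any real content to be the initial stretching, where one must arrange that an alternating witness $\bar c'$ persists all the way to length $\lambda$; the iteration itself is purely formal. Conceptually the argument is dual to the proof of Claim \ref{cla:1-dep-shrinking}: there, $1$-dependence was used inductively to shrink sequences of tuples of realizations against a fixed singleton parameter, whereas here, the singleton-shrinking hypothesis is used inductively to whittle the parameter tuple of an alternating witness down to a singleton, forcing the final contradiction.
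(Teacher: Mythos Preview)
Your proof is correct and follows essentially the same approach as the paper's: stretch the alternating witness from Claim~\ref{cla:infinite is bounded}(4) to length $|T|^{+}+|A|^{+}$ by compactness, then iteratively apply the singleton-shrinking hypothesis to absorb the coordinates of $\bar c'$ into the base one at a time until the sequence is indiscernible over the full tuple, contradicting the cofinal alternation. The paper's version is terser (it simply asserts the long sequence exists and does not spell out the reindexing or cofinality bookkeeping), but the argument is the same.
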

\begin{proof}
Again, assume $A=\emptyset$. Suppose $p$ is a counterexample. By
Claim \ref{cla:infinite is bounded}, there is an indiscernible sequence
$\left\langle a_{i}\left|\, i<\left|T\right|^{+}\right.\right\rangle $
such that $a_{i}\models p$, a formula $\varphi\left(x,y\right)$
and some tuple $c=\left(c_{0},\ldots,c_{n-1}\right)$ such that $\varphi\left(a_{i},c\right)$
holds iff $i$ is even. By assumption, there is some end-segment which
is indiscernible over $c_{0}$. Applying the conclusion of Claim \ref{cla:1-dep-shrinking}
again with $A'=\left\{ c_{0}\right\} $, we get an end-segment which
is indiscernible over $c_{0}c_{1}$. Continuing like this, we get
an end-segment which is indiscernible over $c$ --- contradiction. 
\end{proof}
Since dependent implies $1$-dependent, we get:
\begin{cor}
\label{cor:1-dep-iff}The type $p\left(x\right)$ is $1$-dependent
iff it is dependent iff it satisfies the conclusion of Claim \ref{cla:1-dep-shrinking}. 
\end{cor}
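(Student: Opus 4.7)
The plan is to close a three-way cycle of implications among the conditions (i) $p$ is dependent, (ii) $p$ is $1$-dependent, and (iii) $p$ satisfies the conclusion of Claim \ref{cla:1-dep-shrinking}. Two of the three arrows are already in hand: (ii) $\Rightarrow$ (iii) is Claim \ref{cla:1-dep-shrinking} itself, and (iii) $\Rightarrow$ (i) is Theorem \ref{thm:MainA}. So only (i) $\Rightarrow$ (ii) remains, and this is already foreshadowed in the paper just after the definition of $1$-dependence.

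To spell out (i) $\Rightarrow$ (ii): suppose $p$ is dependent and, towards a contradiction, fails to be $1$-dependent. Then there is some $A' \supseteq A$ with $A' \setminus A$ finite over which $p$ is $1$-independent, yielding $a \models p$ together with $A'$-mutually indiscernible sequences of singletons $\langle I_i \mid i < \omega\rangle$ which are indiscernible over $A'a$ as a whole but with each $I_i$ not indiscernible over $A'a$. Sequences of singletons are in particular sequences of finite tuples, so this data directly realizes condition (1) of Claim \ref{cla:infinite is bounded} for $p$ (over $A'$, which is just as good as over $A$ by Remark \ref{rem:Base Set}). That claim then forces $p$ to be independent, contradicting dependence.

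There is no real obstacle here: all the substance lies in Theorem \ref{thm:MainA}, which was proved above. The corollary is essentially a bookkeeping statement, but its content is what matters downstream: dependence of a type is characterized purely by the behaviour of indiscernible sequences of singletons, at the cost of only finitely many extra parameters in the base. This is exactly the leverage needed later (e.g.\ for Corollary \ref{cor:To-check-whether}).
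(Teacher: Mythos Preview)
Your proof is correct and matches the paper's approach exactly: the paper simply notes ``Since dependent implies $1$-dependent'' before stating the corollary, leaving the reader to assemble the cycle (i) $\Rightarrow$ (ii) $\Rightarrow$ (iii) $\Rightarrow$ (i) from that observation, Claim~\ref{cla:1-dep-shrinking}, and Theorem~\ref{thm:MainA}. You have just spelled this out more fully.
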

Corollary \ref{cor:To-check-whether} follows:
\begin{cor}
A theory $T$ is dependent iff for every indiscernible sequence of
singletons 
\[
\left\langle a_{i}\left|\, i<\left|T\right|^{+}\right.\right\rangle 
\]
 over some finite $A$, and for every singleton $c$, there is $\alpha<\left|T\right|^{+}$
such that $\left\langle a_{i}\left|\,\alpha<i\right.\right\rangle $
is indiscernible over $Ac$. \end{cor}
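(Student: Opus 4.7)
The plan is to observe that this corollary is essentially a specialization of the machinery already built: Corollary \ref{cor:1-dep-iff} identifies dependence with the shrinking conclusion of Claim \ref{cla:1-dep-shrinking}, and we just need to match up the single-variable case with the hypothesis stated here, then bootstrap to tuples via Fact \ref{fac:2variablesDep}.

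For the forward direction (dependent implies the stated condition), I would specialize Proposition \ref{prop:(shrinking-of-indiscernibles)} to $p=\{x=x\}$ with $x$ a single variable and $B=\{c\}$. Since $T$ is dependent, this type is dependent, and since $B$ is a singleton the index bound $|T|^{+}+|B|^{+}$ collapses to $|T|^{+}$. The proposition then delivers an end-segment of $\langle a_{i}\mid i<|T|^{+}\rangle$ that is indiscernible over $Ac$, which is exactly what is required.

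For the reverse direction, I would note that the hypothesis, read with $p(x)=\{x=x\}$ for a single-variable $x$, is precisely the conclusion of Claim \ref{cla:1-dep-shrinking} for this type: realizations of $p$ are singletons so ``$A'$-indiscernible sequence of tuples satisfying $p$'' means exactly ``$A'$-indiscernible sequence of singletons'', and finiteness of $A$ makes the bound $|T|^{+}+|A|^{+}$ collapse to $|T|^{+}$. Applying Theorem \ref{thm:MainA} therefore yields that the single-variable type $\{x=x\}$ is dependent. To conclude that $T$ itself is dependent, I would invoke Fact \ref{fac:2variablesDep}: dependence of the single-variable type over $\emptyset$ implies dependence of the joint type of any finite tuple of singletons, i.e.\ of $\{x=x\}$ for every finite tuple $x$.

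There is no real obstacle here; the only thing to be careful about is the bookkeeping in the reverse direction, namely to recognize that the hypothesis is exactly the single-variable instance of Claim \ref{cla:1-dep-shrinking} (so Theorem \ref{thm:MainA} applies verbatim), and that passing from the single-variable type to tuples requires an explicit appeal to Fact \ref{fac:2variablesDep} rather than an iteration of the hypothesis.
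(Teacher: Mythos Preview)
Your proposal is correct and follows essentially the same route as the paper, which simply says ``Apply Corollary \ref{cor:1-dep-iff} with $p(x)=\{x=x\}$''; you have just unpacked that one line into its constituent pieces (Proposition \ref{prop:(shrinking-of-indiscernibles)} for the forward direction, Theorem \ref{thm:MainA} for the converse). The only superfluous step is your appeal to Fact \ref{fac:2variablesDep}: in the paper's definition, ``$T$ is dependent'' means precisely that the single-variable type $\{x=x\}$ is dependent, so once Theorem \ref{thm:MainA} gives you that, you are already done.
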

\begin{proof}
Apply Corollary \ref{cor:1-dep-iff} with $p\left(x\right)=\left\{ x=x\right\} $. 
\end{proof}

\section{\label{sec:Proof-of-Dep}Proof of Main Theorem \ref{MainThmDep}}

\subsection{Preliminaries on $\NTPT$ theories}

From here up to the end of the section, we assume that the theory
is $\NTPT$.

In the study of forking in $\NTPT$ theories, it is sometimes useful
to consider independence relations. For instance, we denote $a\ind_{B}^{f}C$
for $\tp\left(a/BC\right)$ does not fork over $B$. Similarly, $a\ind_{B}^{i}C$
means that there is a global extension (i.e. an extension to $\C$)
of $\tp\left(a/BC\right)$ which is Lascar invariant over $B$, meaning
that if $d$ and $c$ have the same Lascar strong type over $B$ then
either both $\varphi\left(x,c\right)$ and $\varphi\left(x,d\right)$
are in this extension or neither of them is. We do not really need
Lascar strong type in this section, because we only work over models.
Over a model, Lascar invariance is the same as invariance.

In the proofs we shall only use the following facts about $\NTPT$
theories. These were proved in \cite{Kachernikov}. 
\begin{defn}
\label{def:strictInv} (strict invariance) We say that $\tp\left(a/Bb\right)$
is strictly invariant over $B$ (denoted by $a\ind_{B}^{\ist}b$)
if there is a global extension $p$, which is Lascar invariant over
$B$ (so $a\ind_{B}^{i}b$) and for any $C\supseteq Bb$, if $c\models p|_{C}$
then $C\ind_{B}^{f}c$. \end{defn}
\begin{fact}
\label{fac:forking in NTPT}In $\NTPT$ theories
\begin{enumerate}
\item Forking equals dividing over models.
\item ``Kim's Lemma'': If $\varphi\left(x,a\right)$ divides over $A$,
and $\left\langle b_{i}\left|i<\omega\right.\right\rangle $ is a
sequence satisfying $b_{i}\equiv_{A}a$ and $b_{i}\ind_{A}^{\ist}b_{<i}$,
then $\left\{ \varphi\left(x,b_{i}\right)\left|\, i<\omega\right.\right\} $
is inconsistent. In particular, if $\left\langle b_{i}\left|i<\omega\right.\right\rangle $
is an indiscernible sequence then it witnesses dividing of $\varphi\left(x,a\right)$. 
\end{enumerate}
\end{fact}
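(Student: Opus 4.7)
The plan is to establish (2) first; part (1) will then follow by a standard argument using a strict Morley sequence. For (2), assume $\varphi(x,a)$ divides over $A$, as witnessed by some $A$-indiscernible sequence $\langle a_j \rangle_{j<\omega}$ with $a_0 = a$ and $\{\varphi(x,a_j) : j < \omega\}$ being $k$-inconsistent for some fixed $k$. Let $\langle b_i \rangle_{i<\omega}$ satisfy $b_i \equiv_A a$ and $b_i \ind_A^{\ist} b_{<i}$, and suppose for contradiction that $\{\varphi(x,b_i) : i < \omega\}$ is consistent. The goal is to exhibit a $\operatorname{TP}_2$ pattern, contradicting $\NTPT$.

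I would build an array $\langle c_{i,j} : i,j < \omega\rangle$ row by row: having built rows $<i$, pick a global $A$-Lascar-invariant extension $q_i$ of $\tp(b_i / A b_{<i})$ with the strictness property of Definition \ref{def:strictInv}, and let $\langle c_{i,j}\rangle_{j<\omega}$ be a Morley sequence of $q_i$ over $A b_{<i}$ starting at $c_{i,0} = b_i$. Each row is $A b_{<i}$-indiscernible and consists of realizations of $\tp(a/A)$, so by transferring the witness $\langle a_j \rangle$ into the row via an $A$-automorphism sending $a_0$ to $c_{i,0}$, the set $\{\varphi(x,c_{i,j}) : j < \omega\}$ is again $k$-inconsistent. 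For path consistency, fix $\eta \in \omega^\omega$ and argue by induction on $n$ that $(c_{0,\eta(0)}, \ldots, c_{n,\eta(n)}) \equiv_A (b_0, \ldots, b_n)$: at stage $n$ one uses $c_{n,\eta(n)} \equiv_{A b_{<n}} b_n$ from the $A b_{<n}$-invariance of $q_n$, composed with the automorphism given by the inductive hypothesis. Consistency of $\{\varphi(x,b_i)\}$ then yields consistency of the path, and together with $k$-inconsistent rows this is $\operatorname{TP}_2$.

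The main obstacle is the path-consistency step, where strict (and not merely Lascar-) invariance is essential: the clause $C \ind_A^f c$ from Definition \ref{def:strictInv} is what lets the global types $q_i$ chain together so that a Morley sequence built from $q_i$ over the already-chosen rows does not introduce unwanted forking, keeping the inductive type-equivalence intact. For (1), write the forking relation as $\varphi(x,a) \vdash \bigvee_{\ell < n} \psi_\ell(x, c_\ell)$ with each $\psi_\ell(x,c_\ell)$ dividing over $M$; assuming $\varphi(x,a)$ does not divide over $M$, take a strict Morley sequence $\langle a_i \rangle$ in $\tp(a/M)$ (available over models in $\NTPT$), use Kim's Lemma to realize $\{\varphi(x,a_i) : i < \omega\}$, then by pigeonhole on $\ell$ and Ramsey extract an $M$-indiscernible subsequence on which a fixed $\psi_\ell$-instance remains consistent, contradicting Kim's Lemma applied to that $\psi_\ell$.
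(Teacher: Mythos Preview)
The paper does not prove this statement; it is recorded as a Fact with a reference to \cite{Kachernikov}. Your outline has the right overall shape (build a $\operatorname{TP}_2$ array from a putative counterexample to (2), then deduce (1) via a strict Morley sequence), but the array construction has a genuine gap.

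You take row $i$ to be a Morley sequence $\langle c_{i,j}\rangle_j$ of the strictly invariant type $q_i$ over $Ab_{<i}$, and then assert that $\{\varphi(x,c_{i,j}):j<\omega\}$ is $k$-inconsistent ``by transferring the witness $\langle a_j\rangle$ into the row via an $A$-automorphism sending $a_0$ to $c_{i,0}$''. This does not follow: that automorphism sends $\langle a_j\rangle$ to some \emph{other} $A$-indiscernible sequence $\langle a'_j\rangle$ starting at $b_i$, and there is no reason it coincides with the Morley sequence of $q_i$; two indiscernible sequences through the same first point can behave completely differently with respect to $\varphi$. Your path-consistency induction is also broken as written: from $c_{n,\eta(n)}\equiv_{Ab_{<n}}b_n$ together with $(c_{0,\eta(0)},\ldots,c_{n-1,\eta(n-1)})\equiv_A b_{<n}$ you cannot conclude that the joint types agree, since row $n$ was built over $Ab_{<n}$ and knows nothing about the earlier rows, and the two automorphisms you propose to compose do not combine to send $b_{\le n}$ to the path. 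In the Chernikov--Kaplan proof the roles are essentially reversed: one takes automorphic copies of the dividing witness $\langle a_j\rangle$ as the rows (so $k$-inconsistency is immediate), and the strictness clause $C\ind_A^f c$ of Definition~\ref{def:strictInv} is what lets one position these rows so that every path realises $\tp(\langle b_i\rangle/A)$.
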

Recall:
\begin{defn}
Suppose $p$ is a global type which is invariant over a set $A$. 
\begin{enumerate}
\item We say that a sequence $\left\langle a_{i}\left|\, i<\alpha\right.\right\rangle $
is a Morley sequence of a type $p$ over $B\supseteq A$ if $a_{0}\models p|_{B}$
and for all $i<\alpha$, $a_{i}\models p|_{Ba_{<i}}$. This is an
indiscernible sequence over $B$.
\item We let the type $p^{\left(\alpha\right)}$ be the union of $\tp\left(\left\langle a_{i}\left|\, i<\alpha\right.\right\rangle /B\right)$
running over all $B\supseteq A$. This is again an $A$-invariant
type.
\item If $q$ is also an $A$-invariant global type, we define $p\otimes q$
as the union of $\tp\left(a,b/B\right)$ running over all $B\supseteq A$
where $a\models p|_{B}$ and $b\models q|_{Ba}$. This is also an
$A$-invariant global type.
\item Similarly, given a sequence $\left\langle p_{i}\left|\, i<\alpha\right.\right\rangle $
of $A$-invariant global types, we define $\bigotimes_{i<\alpha}p_{i}$
as the union of $\tp\left(\left\langle a_{i}\left|\, i<\alpha\right.\right\rangle /B\right)$
running over all $B\supseteq A$, where $a_{i}\models p_{i}|_{Ba_{<i}}$.
Again, this is an $A$-invariant global type.
\end{enumerate}
\end{defn}
In the definition above, all types may have infinitely many variables.
\begin{rem}
\label{rem:Morley Seq mut. indis}If $\left\{ J_{0},\ldots,J_{k}\right\} $
is a set of mutually indiscernible sequences over $C\supseteq A$,
and $\left\langle a_{i}\left|\, i<\alpha\right.\right\rangle $ is
a Morley sequence of a global $A$-invariant type over $\left\{ J_{0},\ldots,J_{k}\right\} \cup C$
then $\left\{ J_{0},\ldots,J_{k},\left\langle a_{i}\left|\, i<\alpha\right.\right\rangle \right\} $
is mutually indiscernible over $C$.

(Why? On the one hand, $\left\{ J_{0},\ldots,J_{k}\right\} $ is mutually
indiscernible over $C\cup\left\{ a_{i}|\, i<\alpha\right\} $ since
$\tp\left(\left\langle a_{i}\left|\, i<\alpha\right.\right\rangle /\left\{ J_{0},\ldots,J_{k}\right\} \cup C\right)$
does not split over $A$. On the other hand, $\langle a_{i}|\, i<\alpha\rangle$
is a Morley sequence over $\left\{ J_{0},\ldots,J_{k}\right\} \cup C$,
and as such is indiscernible over that set.)
\end{rem}
We also need to recall the notions of heir and coheir:
\begin{defn}
A global type $p\left(x\right)$ is called a coheir over a set $A$,
if it is finitely satisfiable in $A$. Note that in this case, it
is invariant over $A$, and $p^{\left(\alpha\right)}$ is also a coheir
over $A$. 

It is called an heir over $A$ if for every formula over $A$, $\varphi\left(x,b\right)\in p$,
there exists some $a'\in A$ such that $\varphi\left(x,a'\right)\in p$.\end{defn}
\begin{claim}
\label{cla:heir-coheir}If $p$ is an $A$-invariant global type and
$p^{\left(\omega\right)}$ is both an heir and a coheir over $A$,
then any Morley sequence of $p$ over $A$, $\left\langle a_{i}\left|\, i<\omega\right.\right\rangle $
satisfies $a_{\geq i}\ind_{A}^{\ist}a_{<i}$ for any $i<\omega$.\end{claim}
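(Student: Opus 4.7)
The plan is to take $q:=p^{(\omega)}$ itself as the witness in Definition~\ref{def:strictInv} for the strict invariance $a_{\geq i}\ind^{\ist}_A a_{<i}$. Since $\langle a_j:j<\omega\rangle$ is a Morley sequence of $p$ over $A$, its tail $\langle a_j:j\geq i\rangle$ is a Morley sequence of $p$ over $Aa_{<i}$, so $a_{\geq i}\models p^{(\omega)}|_{Aa_{<i}}$ and $q$ is a global extension of $\tp(a_{\geq i}/Aa_{<i})$. Because $p$ is $A$-invariant, so is $p^{(\omega)}$, and $A$-invariance of a global type implies Lascar invariance over $A$, since Lascar strong type over $A$ refines $\equiv_A$.

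For the remaining clause of Definition~\ref{def:strictInv}, fix $C\supseteq Aa_{<i}$ and $c\models q|_C$; the goal is $C\ind^f_A c$. I would prove the stronger statement that $\tp(C/Ac)$ is finitely satisfiable in $A$, from which non-forking follows automatically: finite satisfiability produces a global $A$-invariant extension of $\tp(C/Ac)$, and any type with an $A$-invariant global extension is non-forking over $A$, as such an extension cannot contain any formula that divides over $A$.

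The heart of the argument is a single application of the heir hypothesis on $p^{(\omega)}$. Suppose $\psi(y,c)\in\tp(C/Ac)$ with $\psi(y,x)\in L(A)$, witnessed by a tuple $c_0$ from $C$ with $\models\psi(c_0,c)$. Then $\psi(c_0,x)\in p^{(\omega)}$, since $c\models p^{(\omega)}|_{Ac_0}$ (because $Ac_0\subseteq C$). The heir property produces some $c_0'\in A$ with $\psi(c_0',x)\in p^{(\omega)}$, and then $\models\psi(c_0',c)$ because $c\models p^{(\omega)}|_{Ac_0'}$, establishing finite satisfiability. The main obstacle is mainly bookkeeping: one must keep straight that heir pulls the parameters of $\tp(C/Ac)$ back into $A$ (the direction relevant to $C\ind^f_A c$), rather than the dual finite satisfiability of $\tp(c/AC)$ in $A$ that coheir would give. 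The coheir assumption does not seem to enter this particular argument and is presumably retained for symmetry or for use elsewhere in Section~\ref{sec:Proof-of-Dep}.
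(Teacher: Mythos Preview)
Your proof is correct and follows exactly the paper's approach: take $p^{(\omega)}$ as the witnessing global extension, use $A$-invariance for Lascar invariance, and use the heir property to show $\tp(C/Ac)$ is finitely satisfiable in $A$, whence $C\ind_A^f c$. Your observation that the coheir hypothesis is not actually used in this argument is also correct---the paper's proof likewise invokes only the heir property.
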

\begin{proof}
The type $p^{\left(\omega\right)}$ is a global $A$-invariant (so
also $A$-Lascar invariant) type that extends $\tp\left(a_{\geq i}/Aa_{<i}\right)$,
and if $c\models p^{\left(\omega\right)}|AC$ then $\tp\left(C/Ac\right)$
is finitely satisfiable over $A$ (since $p^{\left(\omega\right)}$
is an heir over $A$), and it follows that $C\ind_{A}^{f}c$. \end{proof}
\begin{claim}
\label{cla:Making heir}Given any global type $p\left(x\right)$ and
a set $A$, we can find a model $M\supseteq A$ such that $p$ is
an heir over $M$. \end{claim}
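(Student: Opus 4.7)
The plan is to build $M$ as the union of an elementary chain $M_0 \preceq M_1 \preceq \ldots$, where at each step we add witnesses to all formulas over the current stage that have some instance in $p$. Concretely, start with any model $M_0 \preceq \C$ containing $A$, chosen by L\"owenheim--Skolem. Having built $M_n$, for every $L(M_n)$-formula $\varphi(x,y)$ such that there exists a parameter $b$ with $\varphi(x,b) \in p$, fix one such parameter $b_\varphi \in \C$. Then use L\"owenheim--Skolem inside $\C$ to pick $M_{n+1} \preceq \C$ containing $M_n$ together with all these $b_\varphi$. Finally, set $M = \bigcup_{n<\omega} M_n$; this is an elementary submodel of $\C$ since each $M_n \preceq \C$, so in particular $M$ is a model containing $A$.

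To verify the heir property, suppose $\varphi(x,b) \in p$ with $\varphi(x,y)$ an $L(M)$-formula. Since $\varphi$ uses only finitely many parameters from $M$ and $M$ is the union of the chain, those parameters lie in some $M_n$, so $\varphi(x,y)$ is already an $L(M_n)$-formula. By construction, at stage $n+1$ we selected a parameter $b_\varphi$ with $\varphi(x,b_\varphi) \in p$ and placed it inside $M_{n+1} \subseteq M$. Hence there is an $M$-parameter witnessing $\varphi(x,y)$ in $p$, as required.

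There is no real obstacle; the only small point of care is to ensure that the sets $M_n$ are actually models and that they form an elementary chain, which is handled by performing the construction inside $\C$ via L\"owenheim--Skolem at each step. The argument makes no use of $\NTPT$ and applies to arbitrary global types over arbitrary base sets.
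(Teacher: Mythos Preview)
Your proof is correct and follows essentially the same approach as the paper: build an $\omega$-chain of models, at each stage adding a witness for every formula over the previous stage that has an instance in $p$, and take the union. The paper's version is terser and does not spell out the use of L\"owenheim--Skolem or the elementary-chain argument, but the construction is identical.
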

\begin{proof}
Construct inductively a sequence of models $M_{i}$ for $i<\omega$.
Let $M_{0}$ be any model containing $A$. Let $M_{i+1}\supseteq M_{i}$
be such that for every formula $\varphi\left(x,y\right)$ over $M_{i}$,
if $\varphi\left(x,a\right)\in p$ then there is some such $a$ in
$M_{i+1}$. Finally, let $M=\bigcup_{i<\omega}M_{i}$. \end{proof}
\begin{lem}
\label{lem:IndImpliesNonForking}Let $M$ be a model. Suppose that
$p$ is an $M$-invariant global type such that $p^{\left(\omega\right)}$
is an heir-coheir over $M$. Suppose $I$ is an endless Morley sequence
of $p$ over $M$. If $I$ is indiscernible over $Ma$ then $\tp\left(a/MI\right)$
does not fork over $M$.\end{lem}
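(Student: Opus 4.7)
The plan is to argue by contradiction. Suppose $\tp(a/MI)$ forks over $M$; by Fact~\ref{fac:forking in NTPT}(1) some formula $\psi(x,\bar{a},m)\in\tp(a/MI)$ already divides over $M$, where $\bar{a}$ is a finite increasing tuple from $I$ of length $k$ and $m$ is a tuple from $M$. I will construct an $\omega$-sequence $(\bar{a}^{(j)})_{j<\omega}$ of $k$-tuples with $\bar{a}^{(0)}=\bar{a}$ satisfying two competing properties: (i) the hypotheses of Kim's lemma (Fact~\ref{fac:forking in NTPT}(2)), forcing $\{\psi(x,\bar{a}^{(j)},m):j<\omega\}$ to be inconsistent; and (ii) each $\bar{a}^{(j)}$ realizes $\tp(\bar{a}/Ma)$, so that $a$ itself realizes every $\psi(x,\bar{a}^{(j)},m)$. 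The contradiction will finish the argument.

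For the construction, the key point is that $I$ is endless, so I can pick pairwise disjoint increasing $k$-tuples $\bar{a}^{(j)}$ from $I$ for $j<\omega$, with $\bar{a}^{(0)}=\bar{a}$ and every entry of $\bar{a}^{(j+1)}$ lying at a position in $I$ strictly greater than every entry of $\bar{a}^{(\le j)}$. Flattening these tuples into a single sub-sequence $J$ of $I$ (ordered by position in $I$) yields a Morley sequence of $p$ over $M$: sub-sequences of Morley sequences of $M$-invariant global types are themselves Morley sequences, essentially by definition. This is the point where the heir-coheir hypothesis enters --- Claim~\ref{cla:heir-coheir} applied to $J$ gives that the tail of $J$ past any initial segment is $\ind^{\ist}_M$ over that initial segment. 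Restricting to the block decomposition yields $\bar{a}^{(j)}\ind^{\ist}_M \bar{a}^{(<j)}$. Combined with $\bar{a}^{(j)}\equiv_M\bar{a}$, which is immediate from the $M$-indiscernibility of $I$, this establishes (i).

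Kim's lemma then forces $\{\psi(x,\bar{a}^{(j)},m):j<\omega\}$ to be inconsistent. On the other hand, by the assumed indiscernibility of $I$ over $Ma$, any increasing $k$-tuple from $I$ realizes $\tp(\bar{a}/Ma)$, so $\models\psi(a,\bar{a}^{(j)},m)$ for every $j$; that is, $a$ itself realizes the set, and it is consistent. The contradiction completes the proof.

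The main obstacle --- indeed the only nontrivial checkpoint --- is verifying that the flattened sub-sequence $J$ is genuinely a Morley sequence of $p$ over $M$, so that Claim~\ref{cla:heir-coheir} is applicable to it; this is the sole place in the argument where the heir-coheir hypothesis on $p^{(\omega)}$ is used. Everything else is a routine manipulation of indiscernibility together with the given $\NTPT$ version of Kim's lemma.
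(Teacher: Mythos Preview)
Your argument is correct and follows essentially the same route as the paper's proof: reduce to dividing via Fact~\ref{fac:forking in NTPT}(1), pick successive increasing tuples from $I$ after the witnessing tuple, invoke Claim~\ref{cla:heir-coheir} to get strict invariance, and then obtain a contradiction from Kim's lemma versus indiscernibility over $Ma$. One minor remark: the step you flag as the ``main obstacle'' --- that a subsequence of a Morley sequence of an $M$-invariant global type is again a Morley sequence --- is immediate from the definition and requires no work; the heir-coheir hypothesis is used only inside Claim~\ref{cla:heir-coheir}, not in verifying that $J$ is a Morley sequence.
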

\begin{proof}
By Fact \ref{fac:forking in NTPT}, it is enough to see that the type
does not divide over $M$. Suppose $\varphi\left(x,b_{0}\right)\in\tp\left(a/MI\right)$
divides over $M$, where $\varphi$ is over $M$ and $b_{0}\subseteq I$.
For $i\geq1$ choose tuples $b_{i}\subseteq I$ of the same length
as $b_{0}$ that appear after $b_{0}$ in increasing order. By Claim
\ref{cla:heir-coheir}, $b_{i}\ind_{M}^{\ist}b_{<i}$ so by ``Kim's
lemma'' (Fact \ref{fac:forking in NTPT}), it must witness dividing.
But this is a contradiction to the fact that $I$ is indiscernible
over $Ma$.
\end{proof}

\subsection{Proof of the main theorem}

The following is the key definition in the proof.
\begin{defn}
\label{def:altType}Suppose
\begin{enumerate}
\item $p$ is a global $A$-invariant type such that $p|_{A}$ is dependent
(we call such types $A$-invariant and $A$-dependent).
\item $B$ is some set containing $A$. 
\item $\varphi\left(x,y\right)$ is a formula over $A$.
\item $a$ is a tuple of length $\lg\left(y\right)$.
\end{enumerate}
Then we define $\alt\left(\varphi,B,a,p\right)$ to be the maximal
number $n$ such that there is a realization $\left\langle a_{i}\left|\, i<n\right.\right\rangle \models p^{\left(n\right)}|_{B}$,
such that $\varphi\left(a_{i},a\right)$ alternates for $i<n$, i.e.
such that $\varphi\left(a_{i},a\right)\Leftrightarrow\neg\varphi\left(a_{i+1},a\right)$
for $i<n-1.$
\end{defn}
Note that $\alt\left(\varphi,B,a,p\right)$ exists by Claim \ref{cla:infinite is bounded}
(4). Observe that $\alt\left(\varphi,B,a,p\right)\geq\alt\left(\varphi,B',a,p\right)$
when $B'\supseteq B\supseteq A$, but not necessarily the other way.
Sometimes there is equality:
\begin{lem}
\label{lem:altPerserves}Suppose $p$ is a global $A$-invariant and
$A$-dependent type, $a$ some tuple and $I$ an indiscernible sequence
over $Aa$.

Then: for every infinite subset $I'\subseteq I$ and for any formula
$\varphi\left(x,y\right)$ over $A$, $\alt\left(\varphi,IA,a,p\right)=\alt\left(\varphi,I'A,a,p\right)$.\end{lem}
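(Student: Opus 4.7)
My plan is to handle the two inequalities separately. One direction, $\alt(\varphi,IA,a,p)\leq\alt(\varphi,I'A,a,p)$, is immediate from the monotonicity observation stated just after Definition \ref{def:altType}, since $I'A\subseteq IA$. So I will concentrate on the reverse inequality $\alt(\varphi,I'A,a,p)\leq\alt(\varphi,IA,a,p)$.

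For this, I will use a compactness argument. Set $n=\alt(\varphi,I'A,a,p)$ and fix a witness $\bar a=\langle a_i\mid i<n\rangle\models p^{(n)}|_{I'A}$ with $\varphi(a_i,a)$ alternating in $i$. I form the partial type $\Phi(\bar x)$ over $IAa$ whose axioms (i) express that $\bar x\models p^{(n)}|_{IA}$ and (ii) impose the alternation $\varphi(x_i,a)\leftrightarrow\neg\varphi(x_{i+1},a)$. Any realization of $\Phi$ witnesses $\alt(\varphi,IA,a,p)\geq n$, so by compactness it is enough to realize each finite fragment of $\Phi$.

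A finite fragment of $\Phi$ mentions only a finite $I_0\subseteq I$. Because $I'$ is infinite and $I$ is indiscernible over $Aa$, I can pick $I_0'\subseteq I'$ of the same order-type as $I_0$, and indiscernibility then gives $I_0\equiv_{Aa}I_0'$. Choose $\sigma\in\Aut(\C/Aa)$ with $\sigma(I_0')=I_0$. Since $\sigma$ fixes $A$, the $A$-invariance of $p$ yields $\sigma(p)=p$, and hence $\sigma(\bar a)\models p^{(n)}|_{I_0A}$. Moreover $\sigma$ fixes $a$ and $\varphi$ is over $A$, so $\varphi(\sigma(a_i),a)$ still alternates in $i$. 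Thus $\sigma(\bar a)$ realizes the chosen finite fragment of $\Phi$, completing the compactness step.

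The only delicate point — and the one I would be most careful about — is the simultaneous use of the two symmetry principles at play: indiscernibility of $I$ over $Aa$ (which supplies an automorphism fixing $a$ and mapping $I_0'$ to $I_0$) and $A$-invariance of $p$ (which ensures that such an automorphism carries Morley sequences of $p$ to Morley sequences of $p$). Neither ingredient alone would be enough to transport $\bar a$ from a Morley sequence over $I'A$ to one over $IA$ while preserving both the alternation of $\varphi(\,\cdot\,,a)$ and the Morley condition.
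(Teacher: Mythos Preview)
Your proof is correct and follows essentially the same route as the paper's: both arguments first note the trivial inequality from monotonicity, then establish the reverse by a compactness argument, using indiscernibility of $I$ over $Aa$ to replace a finite tuple from $I$ by one from $I'$ and $A$-invariance of $p^{(n)}$ to transport the Morley condition. The only cosmetic difference is that you phrase the transport via an automorphism $\sigma\in\Aut(\C/Aa)$ with $\sigma(p)=p$, whereas the paper works directly at the level of formulas (finding $b'\subseteq I'$ with $b'\equiv_{Aa}b$ and noting $\psi(\bar x,b',c)\in p^{(n)}$ since $p^{(n)}$ is $A$-invariant); these are the same argument in different clothing.
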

\begin{proof}
Obviously, $\alt\left(\varphi,IA,a,p\right)\leq\alt\left(\varphi,I'A,a,p\right)$. 

Conversely, suppose we have some $n$ such that $\bar{a}=\left\langle a_{i}\left|\, i<n\right.\right\rangle \models p^{\left(n\right)}|_{I'A}$
alternates as in the definition. Let $\bar{x}=\left(x_{0},\ldots,x_{n-1}\right)$.
We want to show that the type 
\[
p^{\left(n\right)}\left(\bar{x}\right)|_{IA}\cup\left\{ \varphi\left(x_{i},a\right)^{\left(\mbox{if }\varphi\left(a_{i},a\right)\right)}\left|\, i<n\right.\right\} 
\]
 is consistent.

Take any finite subset and write it as $\psi\left(\bar{x},b,c\right)\wedge\xi\left(\bar{x},a\right)$
where $b\subseteq I$, $c\subseteq A$. As $I'$ is infinite, and
$I$ is indiscernible over $Aa$ we can find $b'\in I'$ such that
$b'\equiv_{Aa}b$ so $\C\models\exists\bar{x}\psi\left(\bar{x},b',c\right)\wedge\xi\left(\bar{x},a\right)$
iff $\C\models\exists\bar{x}\psi\left(\bar{x},b,c\right)\wedge\xi\left(\bar{x},a\right)$.
Now, $\psi\left(\bar{x},b,c\right)\in p^{\left(n\right)}$, and $p^{\left(n\right)}$
is $A$ invariant, hence $\psi\left(\bar{x},b',c\right)\in p^{\left(n\right)}$,
and since $\bar{a}$ satisfies $\psi\left(\bar{x},b',c\right)\wedge\xi\left(\bar{x},a\right)$,
we are done. 
\end{proof}
We will deduce Main Theorem \ref{MainThmDep} from the following theorem:
\begin{thm}
\label{thm:MainLemma}Suppose $p\left(x\right)$ is a dependent type
over $C$ with $\dpr\left(p\right)\geq\kappa$. Assume this is witnessed
by $c\models p$ and $\left\{ I_{i}\left|\, i<\kappa\right.\right\} $
where $I_{i}$ has order type $\omega$ for $i<\kappa$.

Then there are
\begin{itemize}
\item $C'\supseteq C$ with $\left|C'\backslash C\right|$ finite, $c'\models p$
and $J_{0}$
\end{itemize}
such that
\begin{itemize}
\item $\left\{ J_{0}\right\} \cup\left\{ I_{i}\left|\,0<i<\kappa\right.\right\} $
is mutually indiscernible over $C'$; $c'\equiv_{C\cup\left\{ I_{i}\left|\,0<i<\kappa\right.\right\} }c$
; $J_{0}$ is not indiscernible over $C'c'$ and 
\item all the tuples in $J_{0}$ satisfy $p$. 
\end{itemize}
\end{thm}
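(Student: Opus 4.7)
The plan is to realize $J_0$ as a Morley sequence of a carefully chosen global invariant extension of $p$ over $M\cup\{I_i:0<i<\kappa\}$, and to locate $c'$ within the $M\cup\{I_i:0<i<\kappa\}$-orbit of $c$ that breaks $J_0$'s indiscernibility.

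First I will replace $C$ by a model $M\supseteq C$ over which $\{I_i:i<\kappa\}$ is still mutually indiscernible and each $I_i$ is still non-indiscernible over $Mc$ (arranged by a standard Ramsey/saturation argument); by Remark~\ref{rem:Base Set} the dp-rank of $p|_M$ is unchanged. Next, using Claim~\ref{cla:Making heir} along an elementary chain, I will construct a global $M$-invariant extension $q$ of $p|_M$ such that $q^{(\omega)}$ is an heir-coheir over $M$; since $p|_M$ is dependent, $q$ is automatically $M$-dependent.

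Let $J_0=\langle d_n:n<\omega\rangle$ be a Morley sequence of $q$ over $M\cup\{I_i:0<i<\kappa\}$. By Remark~\ref{rem:Morley Seq mut. indis}, $\{J_0\}\cup\{I_i:0<i<\kappa\}$ is mutually indiscernible over $M$, and each $d_n$ realizes $p$ because $q$ extends $p|_M$. It then suffices to find $c'\models p$ with $c'\equiv_{M\cup\{I_i\}_{0<i<\kappa}}c$ such that $J_0$ is not indiscernible over $Mc'$; setting $C':=C$ together with the finitely many parameters from $M$ appearing in the witnessing formula completes the construction.

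The hard case is the main obstacle: suppose no such $c'$ exists, so that for every $c'$ in the $M\{I_i\}_{0<i<\kappa}$-orbit of $c$ realizing $p$, $J_0$ is indiscernible over $Mc'$. Lemma~\ref{lem:IndImpliesNonForking} then yields $c'\ind^{f}_M J_0$ for every such $c'$. To derive a contradiction I will extend $\tp(c/M\{I_i\}_{0<i<\kappa})$ to a global $M\{I_i\}_{0<i<\kappa}$-invariant type $r^*$ and consider a Morley sequence of $r^*$ over $M\cup\{I_i\}_{0<i<\kappa}\cup J_0$: each term of this sequence is a valid $c'$, so by the hard-case hypothesis sees $J_0$ as indiscernible. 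Combining this Morley sequence with the $\alt$-analysis from Lemma~\ref{lem:altPerserves} applied to the heir-coheir type $q$, with the non-indiscernibility of $I_0$ over $Mc$ witnessed by some formula $\varphi$ over $M$, and with Kim's lemma (Fact~\ref{fac:forking in NTPT}), the plan is to exhibit either an explicit conjugate $c'$ of $c$ for which $J_0$ is not indiscernible over $Mc'$ (contradicting the hard case), or a $\NTPT$-violating array. Carefully matching the alternation pattern on $I_0$ witnessed by $\varphi$ to an alternation of a formula on $J_0$ for a suitable conjugate $c'$ is the crux.
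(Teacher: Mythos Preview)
Your outline has the right ingredients but reverses the roles of the two invariant types in a way that leaves the decisive step unexplained. In the paper's proof the heir--coheir machinery is applied not to an extension of $p$, but to the product type $\bigl(\bigotimes_{0<i<\kappa}p_i^{(\omega)}\bigr)^{(\omega)}$ built from coheirs $p_i$ concentrated on (extensions of) the sequences $I_i$. A long Morley sequence $J$ of this product over $MI_0$ is then shrunk (Proposition~\ref{prop:(shrinking-of-indiscernibles)}, using that $c_0,c_1\models p$ is dependent) to an end-segment $J'$ indiscernible over $Mc_0c_1$; only now does Lemma~\ref{lem:IndImpliesNonForking} give $c_0c_1\ind^f_M J'$. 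The invariant type $r$ whose Morley sequence becomes $J_0$ is then chosen as a non-forking extension of $\tp(c_0/MJ')=\tp(c_1/MJ')$, so that \emph{both} $c_0$ and $c_1$ are legitimate first terms of a Morley sequence of $r$ over $MJ'$. Since an automorphism fixing $MJ$ moves $a_0\in I_0$ to $a_1\in I_0$ (possible because $I_0$ is indiscernible over $MJ$), $c_0$ and $c_1$ disagree on $\varphi(x,a_1)$, and the $\alt$-argument with Lemma~\ref{lem:altPerserves} (applicable because $J$ is indiscernible over $a_1$) forces one of them to break indiscernibility of $J_0$.

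In your setup, $J_0$ is a Morley sequence of an arbitrary heir--coheir extension $q$ of $p|_M$, with no link to $\tp(c/M\{I_i\}_{0<i<\kappa})$; there is no reason $c$ or any conjugate of it should sit at the start of a Morley sequence of $q$, so the alternation comparison has nothing to bite on. Your proposed remedy---extending $\tp(c/M\{I_i\}_{0<i<\kappa})$ to an $M\{I_i\}_{0<i<\kappa}$-invariant $r^*$---is unjustified (that base is not a model, so Proposition~\ref{prop:non-forking} does not apply), and even granting it you have not explained how the formula $\varphi$ witnessing non-indiscernibility of $I_0$ over $c$ produces alternation along $J_0$: the parameter $a_1$ lives in $I_0$, which in your construction is not tied to $J_0$ through any indiscernibility over $a_1$. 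The sentence ``carefully matching the alternation pattern on $I_0$ \ldots\ to an alternation of a formula on $J_0$'' is exactly the crux, and it is where the paper's specific architecture (heir--coheir on the $I_i$ side, $r$ built from $\tp(c_0/MJ')$ after shrinking) does the work your outline does not supply.
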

\begin{proof}
Denote $I_{i}=\left\langle f_{i,j}\left|\, j<\omega\right.\right\rangle $.
By compactness, we can find $f_{i,j}$ for $j\in\Zz$ and $i<\kappa$
such that, letting $I_{i}'=\left\langle f_{i,j}\left|\, j\in\mathbb{Z},j<0\right.\right\rangle $,
$\left\{ I_{i}'\concat I_{i}\left|\, i<\kappa\right.\right\} $ is
mutually indiscernible over $C$. Let $U$ be a non-principal ultrafilter
on $\omega$. For $i<\kappa$, let $p_{i}$ be global coheir over
$I_{i}'$ defined by: 
\begin{itemize}
\item for a formula $\psi\left(z,y\right)$ and a tuple $a\in\C$, $\psi\left(z,a\right)\in p_{i}$
iff $\left\{ n<\omega\left|\,\models\psi\left(f_{i,-n},a\right)\right.\right\} \in U$. 
\end{itemize}
So each $p_{i}$ is invariant over $\bigcup_{i<\kappa}I_{i}'$ and
we can consider the type $\left(\bigotimes_{0<i<\kappa}p_{i}^{\left(\omega\right)}\right)^{\left(\omega\right)}$,
and find a model $M\supseteq C\cup\bigcup_{i<\kappa}\left(I_{i}'\concat I_{i}\right)$
such that this type is an heir over $M$ (using Claim \ref{cla:Making heir}). 

Let $\left\langle K_{i}\left|\, i<\kappa\right.\right\rangle \models\bigotimes_{i<\kappa}p_{i}^{\left(\omega\right)}|_{M}$,
then:
\begin{itemize}
\item each $K_{i}$ is a Morley sequence of $p_{i}$ over $M$, 
\item since $\left\{ I_{i}'\concat I_{i}\left|\, i<\kappa\right.\right\} $
is mutually indiscernible over $C$, and $p_{i}$ is finitely satisfiable
in $I'_{i}$, $\left\langle K_{i}\left|\, i<\kappa\right.\right\rangle \equiv_{C}\left\langle I_{i}\left|\, i<\kappa\right.\right\rangle $
(this follows from the fact that the order type of $I_{i}'$ is $\omega^{*}$
--- $\omega$ in reverse), and
\item by Remark \ref{rem:Morley Seq mut. indis}, $\left\{ K_{i}\left|\, i<\kappa\right.\right\} $
is mutually indiscernible over $M$.
\end{itemize}
By the second bullet, there is an automorphism of $\C$ that fixes
$C$ (but may move $M$ and $p_{i}$) and maps $\left\langle K_{i}\left|\, i<\kappa\right.\right\rangle $
to $\left\langle I_{i}\left|\, i<\kappa\right.\right\rangle $. By
applying it we we may assume that $\left\langle K_{i}\left|\, i<\kappa\right.\right\rangle =\left\langle I_{i}\left|\, i<\kappa\right.\right\rangle $. 

Let $\mu=\left|T\right|^{+}$. Let $J=\left\langle d_{i}\left|\, i<\mu\right.\right\rangle $
be a Morley sequence of $\left(\bigotimes_{0<i<\kappa}p_{i}^{\left(\omega\right)}\right)$
over $MI_{0}$ so that $d_{0}$ is the infinite tuple $\left\langle I_{i}\left|\,0<i<\kappa\right.\right\rangle $.
Note that $I_{0}$ is indiscernible over $JM$, $J$ is indiscernible
over $I_{0}M$ and $\left\{ I_{i}\left|\, i<\kappa\right.\right\} $
is mutually indiscernible over $M\cup\left\{ d_{i}\left|\,0<i<\mu\right.\right\} $
(by Remark \ref{rem:Morley Seq mut. indis}).

Now, $I_{0}$ is not indiscernible over $Cc$. So there are increasing
tuples $a_{0}$ and $a_{1}$ from $I_{0}$ of the same length and
a formula $\varphi\left(x,y\right)$ over $C$ such that $\neg\varphi\left(c,a_{0}\right)\land\varphi\left(c,a_{1}\right)$
holds. By indiscernibility, there is an automorphism $\sigma$ of
$\C$ that fixes $JM$ and takes $a_{0}$ to $a_{1}$. Let $c_{0}=c$
and $c_{1}=\sigma\left(c_{0}\right)$. Then $\varphi\left(c_{0},a_{1}\right)\land\neg\varphi\left(c_{1},a_{1}\right)$
holds.

By Proposition \ref{prop:(shrinking-of-indiscernibles)}, for some
$\alpha<\mu$, the sequence $J'=\left\langle d_{i}\left|\,\alpha<i<\mu\right.\right\rangle $
is indiscernible over $Mc_{0}c_{1}$. By Lemma \ref{lem:IndImpliesNonForking},
$c_{0}c_{1}\ind_{M}^{f}J'$. 

Let $r\left(x\right)$ be a global non-forking (over $M$) extension
of $\tp\left(c_{0}/MJ'\right)$ ($=\tp\left(c_{1}/MJ'\right)$). Since
$\tp\left(c_{0}/C\right)$ is dependent, $r\left(x\right)$ is $M$-dependent
and $M$-invariant (by Proposition \ref{prop:non-forking}). Let $n=\alt\left(\varphi,MJ,a_{1},r\right)$,
and let $\left\langle e_{i}\left|\, i<\omega\right.\right\rangle $
be a Morley sequence of $r$ over $MJ$ that witnesses this, i.e.
such that $\varphi\left(e_{i},a_{1}\right)$ alternates for $i<n$.
Let $\left\langle e_{i}\left|\,\omega\leq i<\omega+\omega\right.\right\rangle $
be a Morley sequence of $r$ over $MJc_{0}c_{1}e_{<\omega}$. 

So:
\begin{itemize}
\item $I_{0}'=\left\langle e_{i}\left|\, i<\omega+\omega\right.\right\rangle $
is an $MJ$-indiscernible sequence, and moreover $\left\{ I_{i}\left|\,0<i<\kappa\right.\right\} \cup\left\{ I_{0}'\right\} $
is a set of $MJ'$-mutually indiscernible sequences.
\end{itemize}
Now, both $\left\langle c_{0},e_{\omega},e_{\omega+1},\ldots\right\rangle $,
$\left\langle c_{1},e_{\omega},e_{\omega+1},\ldots\right\rangle $
are Morley sequences of $r$ over $MJ'$. But if in addition $\left\langle c_{0},e_{0},e_{1},\ldots\right\rangle $
and $\left\langle c_{1},e_{0},e_{1},\ldots\right\rangle $ are also
Morley sequences of $r$ over $MJ'$, then since one of $c_{0}$,
$c_{1}$, adds an alternation of the truth value of $\varphi\left(x,a_{1}\right)$,
this is a contradiction to the choice of $e_{i}$ and to Lemma \ref{lem:altPerserves}
(which we can use because $J$ is indiscernible over $a_{1}$, and
$J'$ is infinite). Let $c'\in\left\{ c_{0},c_{1}\right\} $ be such
that $\left\langle c',e_{0},e_{1},\ldots\right\rangle $ is not a
Morley sequence of $r$ over $MJ'$. Note that $c'\equiv_{MJ}c$ and
that $MJ$ contains $C\cup\left\{ I_{i}\left|\,0<i<\kappa\right.\right\} $.

So, $\left\langle c',e_{0},\ldots\right\rangle \not\equiv_{MJ'}\left\langle c',e_{\omega},\ldots\right\rangle $
and hence the sequence $I_{0}'$ is not indiscernible over $c'MJ'$.
Let $J_{0}$ be some infinite subset of $I_{0}'$ of order type $\omega$
that witnesses this, and let $C'\supseteq C$ be such that $\left|C'\backslash C\right|$
is finite, and $C'\subseteq MJ'$ so that $J_{0}$ is not indiscernible
over $C'c'$. 

It is now easy to check that all conditions are satisfied.
\end{proof}
Now let us conclude:
\begin{proof}
(of Main Theorem \ref{MainThmDep}) Suppose $p$ is a dependent type
over $A$ with $\dpr\left(p\right)\geq\kappa$. Consider the family
$\mathcal{F}$ of triples $\left(s,c,J,A'\right)$ such that
\begin{itemize}
\item $c\models p$, $s\subseteq\kappa$, $J=\left\langle I_{i}\left|\, i<\kappa\right.\right\rangle $;
$A\subseteq A'$; $J$ is a sequence of $A'$-mutually indiscernible
sequences such that for each $i<\kappa$, $I_{i}$ is not indiscernible
over $A'c$; all tuples in $I_{i}$ for $i\in s$ realize $p$.
\end{itemize}
By assumption, $\mathcal{F}$ is not empty. Define the following order
relation on these triples:
\begin{itemize}
\item $\left(s,c,J,A'\right)\leq\left(s',c',J',A''\right)$ iff ($s\subseteq s'$,
$A'\subseteq A''$, if $i\in s\cup\left(\kappa\backslash s'\right)$
then $I_{i}=I_{i}'$ and $c'\equiv_{A'\cup\left\{ I_{i}\left|\, i\in s\cup\left(\kappa\backslash s'\right)\right.\right\} }c$).
\end{itemize}
It is easy to see that by compactness $\mathcal{F}$ satisfies the
conditions of Zorn's Lemma, so it has a maximal member $\left(s_{0},c_{0},J_{0},A'_{0}\right)$.
By Theorem \ref{thm:MainLemma}, $s_{0}=\kappa$ and we are done. 
\end{proof}
\bibliographystyle{alpha}
\bibliography{common}

\end{document}